\documentclass[10pt,oneside]{amsart}

\usepackage[margin=1in]{geometry}
\usepackage{tikz}
\usepackage{enumerate}

\usepackage{amsmath,amssymb,amsthm}
\usetikzlibrary{arrows.meta}

\numberwithin{equation}{section}

\usepackage[colorlinks,citecolor=red,pagebackref,hypertexnames=false]{hyperref}

\theoremstyle{plain}
\newtheorem{theorem}{Theorem}[section]
\newtheorem{lemma}[theorem]{Lemma}
\newtheorem{corollary}[theorem]{Corollary}
\newtheorem{proposition}[theorem]{Proposition}

\theoremstyle{definition}
\newtheorem{definition}[theorem]{Definition}
\newtheorem{example}[theorem]{Example}
\newtheorem{remark}[theorem]{Remark}

\newcommand{\R}{\mathbb R}
\newcommand{\hd}{\dim_{\mathcal H}}

\title{Positive measure of unions of variable surfaces}

\author{Alex Iosevich}
\address{Department of Mathematics, University of Rochester, Rochester, NY}
\email{alex.iosevich@rochester.edu}

\author{Zhangze Li}
\address{Department of Mathematics, The Ohio State University, Columbus, OH}
\email{li.15128@osu.edu}

\author{Krystal Taylor}
\address{Department of Mathematics, The Ohio State University, Columbus, OH}
\email{taylor.2952@osu.edu}

\thanks{A.~I. was supported in part by the National Science Foundation under NSF DMS-2154232. }
\thanks{ K.T. is supported in part by the Simons Foundation Grant GR137264.}
\thanks{A.I. wishes to thank Arian Nadjimzadah for a very helpful conversation about curved Kakeya, which provided an interesting and useful perspective on the results of this paper.}

\subjclass[2020]{
Primary 42B20;
Secondary 28A78, 35S30, 58J40, 53C65
}

\keywords{
Generalized Radon transforms,
Fourier integral operators,
unions of hypersurfaces,
Kakeya phenomena,
rotational curvature,
variable coefficient averaging operators
}

\begin{document}

\begin{abstract}
Let $E \subset \mathbb R^d$, $d \ge 2$, be compact, and let $\phi(x,y)$ be a smooth function satisfying the Phong--Stein rotational curvature condition on $\{\phi(x,y)=1\}$. We prove that if $\dim_{\mathcal H}(E)>1$, then
$$
\left|\bigcup_{x \in E} \{y : \phi(x,y)=1\}\right|>0.
$$
This extends the positivity theorem of Mitsis ($d\geq3$) and Wolff ($d=2$) for spheres to a general variable coefficient setting via $L^2$ estimates for Fourier integral operators. The argument also shows that positivity is stable under finite-order degeneracies of the Monge--Amp\`ere determinant through the weighted averaging theory of Sogge and Stein.

We next consider variable level sets
$$
\Sigma_x=\{y:\phi(x,y)=t(x)\},
$$
where $t(x)$ is measurable. A maximal operator argument yields positivity under the condition $\dim_{\mathcal H}(E)>2$. We show that this loss reflects a genuine geometric obstruction related to Kakeya-type compression phenomena. In contrast, under a direct geometric intersection hypothesis controlling overlaps of the hypersurfaces $\Sigma_x$, we recover the full threshold $\dim_{\mathcal H}(E)>1$ for arbitrary measurable selections $t=t(x)$.

At the endpoint $\dim_{\mathcal H}(E)=1$, we obtain positivity under the additional assumption that $E$ is $1$-rectifiable with $\mathcal H^1(E)>0$. We also show that positivity of Lebesgue measure does not in general imply interior regularity: even for large or rectifiable parameter sets, the resulting unions may have empty interior. Finally, we discuss extensions to higher co-dimension families and the role of geometric structure in preventing compression phenomena.
\end{abstract}

\maketitle

\setcounter{tocdepth}{1}
\tableofcontents

\section{Introduction}

A recurring principle in geometric measure theory and harmonic analysis is that analytic smoothing forces geometric spreading. In problems involving unions of hypersurfaces, Fourier integral operator estimates prevent mass from concentrating excessively, and this often forces the associated geometric family to occupy a set of positive Lebesgue measure. The extent to which this spreading mechanism survives under variable coefficients, measurable parameter dependence, and degeneracies of curvature is closely connected to Kakeya-type compression phenomena and to the geometry of the underlying canonical relation.

The purpose of this paper is to isolate and develop this principle in the setting of generalized Radon transforms. We study unions of hypersurfaces of the form
$$
\{y:\phi(x,y)=t\},
$$
where $\phi$ satisfies a rotational curvature condition in the sense of Phong and Stein, and show that positivity of Lebesgue measure emerges from a balance between Fourier integral operator smoothing and geometric nonconcentration.

A prototypical result in this direction states that if $E \subset \mathbb R^d$ has Hausdorff dimension greater than $1$, then the union of unit spheres centered at points of $E$ has positive Lebesgue measure; this result follows from the work of Mitsis for $d\geq3$ and Wolff when $d=2$ \cite{Mitsis, WolffLocalSmoothing}. 
These authors also considered spheres with variable radius, showing that the same positivity conclusion holds when the radius is allowed to depend measurably on the center, provided it remains in a fixed compact interval. This shows that, in the translation-invariant setting of spheres, the positivity mechanism is robust under arbitrary measurable parameter dependence. The threshold $1$ is natural from a dimensional point of view. If $\dim_{\mathcal H}(E)<1$, the same parametrization argument shows that the union has Hausdorff dimension at most $\dim_{\mathcal H}(E)+(d-1)$, and in particular cannot have positive $d$-dimensional Lebesgue measure. Indeed, after local parametrization, the union of hypersurfaces may be parametrized as the image of a set of dimension at most $\dim_{\mathcal H}(E)+(d-1)$ under a Lipschitz map. 
Thus if $\dim_{\mathcal H}(E)<1$, one cannot expect the union to have positive $d$-dimensional Lebesgue measure. At the critical value $\dim_{\mathcal H}(E)=1$, this dimensional obstruction disappears, and additional geometric structure becomes relevant. This transition from dimensional to geometric control is one of the central themes of the present paper. Above the threshold, positivity follows from smoothing and transversality mechanisms alone, while at the critical exponent the behavior depends more delicately on the geometry of the parameter set. As shown below, rectifiability provides sufficient structure to recover positivity at the endpoint. These results illustrate that dimension alone does not control the behavior at the critical exponent, and that additional geometric structure plays an essential role.

However, the examples at the end of the paper demonstrate that without such structure, positivity can fail even for large parameter sets. In the case of circles in the plane with variable radius, a construction of Talagrand \cite{Talagrand} shows that a set of Lebesgue measure zero may contain a circle centered at every point of a line. In a different direction, Besicovitch-type constructions \cite{Besicovitch} show that for families of lines of the form
$$
\{y \in \mathbb R^2 : x \cdot y = t(x)\},
$$
with $t$ an arbitrary measurable function, one may arrange that the union has Lebesgue measure zero even when the set of parameters $E$ has positive two-dimensional Lebesgue measure. These examples indicate that in variable-coefficient settings, even large parameter sets do not guarantee positivity of measure. From a geometric point of view, the underlying issue is failure of transverse spreading: although the parameter set may be large, the associated hypersurfaces can still concentrate along lower-dimensional configurations if the family is allowed to vary too freely. This is the mechanism behind both classical Kakeya phenomena and the variable-parameter obstructions discussed in the present paper.

The results of the present paper show that positivity emerges from a balance between smoothing and geometry. Fourier integral operator estimates provide analytic spreading mechanisms, while additional geometric hypotheses prevent concentration and compression effects that can otherwise destroy positivity even in highly curved settings.

\begin{remark}
The results above are closely related to variable coefficient Falconer distance problems. If $E\subset \mathbb R^d$ is compact and $\mu$ is a Frostman measure on $E$, one may define a pushforward measure $\gamma$ on
$$
\Delta_\phi(E)=\{\phi(x,y):x,y\in E\}
$$
by
$$
\int g(t)\,d\gamma(t)=\int\int g(\phi(x,y))\,d\mu(x)d\mu(y).
$$
The density of $\gamma$, when it exists, is formally given by
$$
\gamma(t)=\int T_t\mu(x)\,d\mu(x),
$$
where $T_t$ is the generalized Radon transform associated with the level sets $\{\phi(x,y)=t\}$. Thus the Falconer-type problem and the positivity problem studied in the present paper are governed by the same family of Fourier integral operators. In both settings, the underlying mechanism is that smoothing estimates for generalized Radon transforms force geometric spreading of mass across the associated family of hypersurfaces. The present paper may therefore be viewed as part of a broader principle connecting Fourier integral operator smoothing to geometric coverage phenomena.

In the case when $\phi$ satisfies the Phong--Stein rotational curvature condition, it was shown in \cite{EswarathasanIosevichTaylor} that if $\dim_{\mathcal H}(E)>\frac{d+1}{2}$, then $|\Delta_\phi(E)|>0$. The distinction is that the Falconer problem tests the operator against two copies of the fractal measure and leads to the threshold $\frac{d+1}{2}$, whereas the union problem considered here tests against one fractal parameter set and yields the threshold $\dim_{\mathcal H}(E)>1$.

We also note that the argument in \cite{EswarathasanIosevichTaylor} extends to a broader class of phase functions than is usually stated. Indeed, the proof of the variable coefficient Falconer result relies only on $L^2$ Sobolev bounds for the associated generalized Radon transform. By the theorem of Sogge and Stein \cite{SoggeStein2}, these bounds continue to hold under the weaker assumption that the Monge--Amp\`ere determinant does not vanish to infinite order, after insertion of a suitable weight. It follows that the positivity of the distance set $|\Delta_\phi(E)|$ for $\dim_{\mathcal H}(E)>\frac{d+1}{2}$ remains valid under this weaker hypothesis.

This perspective also clarifies the relationship with earlier work of the first author and {\L}aba \cite{IosevichLaba2003}, where distance set results were obtained under assumptions on the decay of the Fourier transform of surface measures associated to convex bodies. The weighted formulation described above shows that such decay assumptions may be replaced by the weaker requirement that the Monge--Amp\`ere determinant does not vanish to infinite order. In this sense, the arguments in \cite{IosevichLaba2003} can be strengthened by incorporating the weighted averaging operators arising from the Sogge--Stein theory.
\end{remark}

\section{Preliminaries}

We first state some notation and definitions used throughout the paper.
\begin{definition}[Phong--Stein Rotational Curvature Condition]{}\label{def:phongstein}
We say a smooth function $\phi:\mathbb R^d\times\mathbb R^d\to\mathbb R$ satisfies the \textit{Phong--Stein rotational curvature condition} at $t$ if
$$
\det
\begin{pmatrix}
0 & \nabla_x \phi \\
-(\nabla_y \phi)^T & \partial^2_{xy}\phi
\end{pmatrix}
\neq 0
$$
on the set $\{(x,y):\phi(x,y)=t\}$.
\end{definition}

\begin{definition}[Radon Transform]\label{defn: Radon Trans}
    Let $\phi:\mathbb R^d\times \mathbb R^d\to\mathbb R$ be a smooth function. Given $f:\mathbb R^d\to \mathbb R$, define the \textit{Radon transform} of $f$ associated to $\phi$ at $t$ as
    $$
    R_tf(x):=\int_{\phi(x,y)=t} f(y)\psi(x,y)\,d\sigma_x(y),
    $$
    where $\psi$ is a smooth cut-off function and $\sigma_x$ is the surface measure on the set $\{y:\phi(x,y)=t\}$.
\end{definition}
When $t$ is clear from the context, we simplify the notation by writing $R_tf=Rf$.

Under the rotational curvature condition, we have the following theorem on the boundedness of the Radon transform.
\begin{theorem}[\cite{PS,PhongStein}]{}
    If $\phi\in C^\infty(\mathbb R^d\times\mathbb R^d), d\geq 2$ satisfies the rotational curvature condition at $t$, then 
    \[R_t:L^2(\mathbb R^d)\to W^{\frac{d-1}{2},2}(\mathbb R^d)\]
    is a bounded operator. Here $W^{s,2}(\mathbb R^d)$ denotes the $L^2$--Sobolev space of functions with $s$ generalized derivatives in $L^2(\mathbb R^d)$.
\end{theorem}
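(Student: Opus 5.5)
This is the Phong--Stein $L^2$ Sobolev estimate, and I would prove it as a special case of Hörmander's $L^2$ theory for Fourier integral operators with a local canonical graph. The first step is to write $R_t$ as an oscillatory integral. Since $\phi(x,y)=t$ defines the incidence hypersurface and $\nabla_y\phi\neq 0$ there (this is forced by the nonvanishing of the determinant, because a zero last row block would kill it), the coarea formula lets us write $\psi(x,y)\,d\sigma_x(y)=a(x,y)\,\delta(\phi(x,y)-t)\,dy$ with $a$ smooth and compactly supported. Using $\delta(s)=\frac1{2\pi}\int e^{is\tau}\,d\tau$, we obtain
$$
R_tf(x)=\frac{1}{2\pi}\int\!\!\int e^{i\tau(\phi(x,y)-t)}\,a(x,y)\,f(y)\,d\tau\,dy .
$$
This is a Fourier integral operator of order $-\frac{d-1}{2}$: the amplitude has order $0$ in a single frequency variable $\tau\in\mathbb R$, and the order is $0+\frac{1}{2}-\frac{2d}{4}=-\frac{d-1}{2}$. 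Its canonical relation is
$$
C=\{(x,\tau\nabla_x\phi;\,y,-\tau\nabla_y\phi):\phi(x,y)=t,\ \tau\neq0\}.
$$

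The second step, which is the heart of the argument, is to show that $C$ is locally a canonical graph, meaning that both projections $C\to T^*\mathbb R^d$ are local diffeomorphisms. For a nondegenerate phase $\Phi(x,y,\tau)=\tau(\phi-t)$, this is equivalent to nonvanishing of the $(d+1)\times(d+1)$ determinant
$$
\det\begin{pmatrix}\Phi_{x\tau}&\Phi_{xy}\\ \Phi_{\tau\tau}&\Phi_{\tau y}\end{pmatrix}
=\det\begin{pmatrix}\nabla_x\phi^T&\tau\,\partial^2_{xy}\phi\\ 0&\nabla_y\phi\end{pmatrix}
$$
on $C$. Factoring out the powers of $\tau$ and permuting rows and columns turns this into $\tau^{d-1}$ times the Monge--Amp\`ere determinant of Definition~\ref{def:phongstein}, up to sign, so it is nonzero for $\tau\neq0$. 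I expect this verification to be the main obstacle: one has to track signs and transposes carefully and confirm the equivalence between the graph condition and nondegeneracy of this mixed Hessian.

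The third step applies Hörmander's theorem: a compactly supported Fourier integral operator of order $m$ whose canonical relation is a local canonical graph maps $L^2_{comp}\to L^2_{loc}$ when $m=0$. Composing with $(I-\Delta)^{\frac{d-1}{4}}$, which is a pseudodifferential operator of order $\frac{d-1}{2}$, gives an operator of order $0$ with the same canonical relation, by the composition calculus. It is therefore $L^2$ bounded, and this yields $R_t:L^2\to W^{\frac{d-1}{2},2}$.

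Alternatively, one can avoid the calculus with a direct $TT^*$ argument. Insert a Littlewood--Paley piece at frequency $2^j$ and take $R_j=R_tP_j$. The kernel of $R_jR_j^*$ is $\int e^{i\tau(\phi(x,z)-\phi(y,z))}\cdots$, and the rotational curvature gives the nondegeneracy needed for stationary phase in $(z,\tau)$. This yields the decay $\|R_j\|_{L^2\to L^2}\lesssim 2^{-j\frac{d-1}{2}}$, and summing over $j$ gives the same conclusion. Compact support of $\psi$ makes all the localizations legitimate.
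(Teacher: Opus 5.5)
The paper gives no proof of this statement. It is quoted from Phong--Stein, and what the paper later uses is its microlocal content: the bound $\|RP_jf\|_2\lesssim 2^{-j\frac{d-1}{2}}\|P_jf\|_2$, and almost-orthogonality for operators whose canonical relation is a local canonical graph, cited from Sogge. Your main argument supplies the standard proof of exactly this, and it is correct.

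The determinant step you flag does go through. Multiply the first column by $\tau$, then pull $\tau$ out of each of the first $d$ rows; this gives
$$
\det\begin{pmatrix}\nabla_x\phi^T&\tau\,\partial^2_{xy}\phi\\ 0&\nabla_y\phi\end{pmatrix}=\tau^{d-1}\det\begin{pmatrix}\nabla_x\phi^T&\partial^2_{xy}\phi\\ 0&\nabla_y\phi\end{pmatrix}.
$$
Moving the last row to the top and transposing then gives the Monge--Amp\`ere matrix, up to the sign of its first column.

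The points you leave implicit are routine:
\begin{enumerate}[(i)]
\item Split off a cutoff $\chi(\tau)$ near $\tau=0$. That piece has a smooth compactly supported kernel, so it is infinitely smoothing.
\item Take $a$ supported in a neighborhood of $\{\phi=t\}$ on which the determinant stays nonzero.
\item Note that the determinant also forces $\nabla_x\phi\neq0$, so $C$ avoids both zero sections.
\item Either cut off $(I-\Delta)^{\frac{d-1}{4}}$ so the composition is properly supported, or quote H\"ormander's theorem directly in the form $L^2_s\to L^2_{s-m}$.
\end{enumerate}

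The alternative $TT^*$ sketch does not work as written. If $R_j$ is the piece with cutoff $\beta(2^{-j}\tau)$, the kernel of $R_jR_j^*$ has two independent frequency variables:
$$
\iiint e^{i[\tau(\phi(x,z)-t)-\sigma(\phi(y,z)-t)]}\,\beta(2^{-j}\tau)\beta(2^{-j}\sigma)\,a(x,z)\overline{a(y,z)}\,dz\,d\tau\,d\sigma .
$$
The single-$\tau$ kernel you wrote is $SS^*$ for a different operator, $SF(x)=\iint e^{i\tau(\phi(x,y)-t)}\beta(2^{-j}\tau)a(x,y)F(y,\tau)\,dy\,d\tau$, which acts on functions of $d+1$ variables. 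The bound $2^j(1+2^j|x-y|)^{-N}$ on its kernel does give $\|S\|\lesssim 2^{-j\frac{d-1}{2}}$. But $R_jf=S(f\otimes\tilde\beta(2^{-j}\cdot))$, and $\|f\otimes\tilde\beta(2^{-j}\cdot)\|_2\sim 2^{j/2}\|f\|_2$. So this route only yields $\|R_j\|\lesssim 2^{-j\frac{d-2}{2}}$, which is half a derivative short.

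To make a direct argument work you need one of two things:
\begin{enumerate}[(i)]
\item The genuine two-frequency kernel above, integrating by parts in $(z,\tau,\sigma)$; the canonical graph property forces $x=y$ and $\tau=\sigma$ at critical points.
\item The device of treating $\tau\sigma(\phi(x,y)-t)$ as a phase on $\mathbb R^{d+1}_{(x,\tau)}\times\mathbb R^{d+1}_{(y,\sigma)}$. Its mixed Hessian at $\tau=\sigma=1$ on $\{\phi=t\}$ is the rotational curvature matrix up to transpose and sign. H\"ormander's $\lambda^{-\frac{d+1}{2}}$ bound then gives $\|R_j\|\lesssim 2^j\cdot 2^{-j\frac{d+1}{2}}$.
\end{enumerate}

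Finally, summing dyadic pieces into a Sobolev estimate needs more than $\|R_tP_j\|\lesssim 2^{-j\frac{d-1}{2}}$. You also need the output to be essentially localized at frequency $2^j$, that is, rapid decay of $\|P_kR_tP_j\|$ when $|j-k|$ is large.
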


We also consider the maximal variant of Radon transform.
\begin{definition}[Maximal Operator]{}
    Let $\phi:\mathbb R^d\times \mathbb R^d\to\mathbb R$ be a smooth function and $I\subset(0,\infty)$ be a fixed compact interval. Given $f:\mathbb R^d\to \mathbb R$, define the \textit{maximal operator} of $f$ associated to $\phi$ as
    \[\mathcal{M}R_If(x):=\sup_{t\in I}|R_tf(x)|.\]
\end{definition}

\begin{theorem}[\cite{SoggeStein2,Stein}]{}
    Let $I\subset(0,\infty)$ be a fixed compact interval. If $\phi\in C^\infty(\mathbb R^d\times\mathbb R^d),d\geq 3$ satisfies the rotational curvature condition for all $t\in I$, then for every $\epsilon>0$,
    \[\mathcal{M}R_I:L^2(\mathbb R^d)\to W^{\frac{d-2}{2}-\varepsilon,2}(\mathbb R^d)\]
    is a bounded operator.
\end{theorem}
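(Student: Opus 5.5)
The plan is to prove the estimate in the dual form that the maximal argument actually needs, namely
$$
\Bigl\|\sup_{t\in I}|R_tf|\Bigr\|_{L^2(\mathbb R^d)}\lesssim \|f\|_{W^{-\frac{d-2}{2}+\varepsilon,2}(\mathbb R^d)} .
$$
This is how I would interpret the statement, since a supremum does not commute with fractional derivatives. The approach combines a one-dimensional Sobolev embedding in the parameter $t$ with the fixed-$t$ bound of Phong--Stein stated above. It is applied frequency-by-frequency after a Littlewood--Paley decomposition $f=\sum_{k\ge 0}P_kf$, where $P_k$ localizes $|\xi|\sim 2^k$ and $P_0$ handles low frequencies. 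The low-frequency piece is trivial, since $R_t$ has a smooth, compactly supported kernel uniformly in $t\in I$ after it is tested against smooth functions.

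\textbf{Step 1: fixed-$t$ estimates.} For each $k$ I would establish
$$
\sup_{t\in I}\|R_tP_kf\|_{L^2}\lesssim 2^{-k\frac{d-1}{2}}\|P_kf\|_{L^2},\qquad \sup_{t\in I}\|\partial_tR_tP_kf\|_{L^2}\lesssim 2^{k}\,2^{-k\frac{d-1}{2}}\|P_kf\|_{L^2}.
$$
The first bound is the Phong--Stein theorem quoted above. Its constants are uniform in $t\in I$ because the rotational curvature determinant is bounded below on the compact set $\{(x,y,t):\phi(x,y)=t,\ t\in I\}\cap\operatorname{supp}\psi$. For the second bound I would write $R_tf(x)=\int \delta(\phi(x,y)-t)f(y)\tilde\psi(x,y)\,dy$ and represent the delta function as $\int e^{i\tau(\phi(x,y)-t)}\,d\tau$. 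This exhibits $R_t$ as a Fourier integral operator with nondegenerate canonical relation, which is exactly the Phong--Stein condition. Differentiating in $t$ produces a factor of $\tau$, so $\partial_tR_t$ is an FIO with the same phase and one higher order. On frequency $2^k$ one has $|\tau|\sim 2^k$, because $\nabla_y\phi\neq0$ forces $\tau\sim|\xi|$. The $L^2$ theory of nondegenerate FIOs then gives the second bound.

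\textbf{Step 2: the supremum in $t$.} For $g(t)=R_tP_kf(x)$ I would use the elementary inequality
$$
\sup_{t\in I}|g(t)|^2\le \frac{1}{|I|}\int_I|g|^2\,dt+2\int_I|g||g'|\,dt .
$$
Integrating in $x$, applying Cauchy--Schwarz, and using Step 1 yields
$$
\Bigl\|\sup_t|R_tP_kf|\Bigr\|_2^2\lesssim 2^{-k(d-1)}\|P_kf\|_2^2+2^{-k(d-1)}2^{k}\|P_kf\|_2^2\lesssim 2^{-k(d-2)}\|P_kf\|_2^2 .
$$
This is the gain of $\frac{d-2}{2}$ derivatives; half a derivative is lost relative to the fixed-$t$ estimate, as expected from Sobolev embedding in one variable.

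\textbf{Step 3: summation.} Using $\sup_t|R_tf|\le\sum_k\sup_t|R_tP_kf|$ and the triangle inequality gives
$$
\Bigl\|\sup_t|R_tf|\Bigr\|_2\lesssim\sum_k 2^{-k\frac{d-2}{2}}\|P_kf\|_2\le\Bigl(\sum_k2^{-2\varepsilon k}\Bigr)^{1/2}\Bigl(\sum_k 2^{-k(d-2)+2\varepsilon k}\|P_kf\|_2^2\Bigr)^{1/2} .
$$
By Cauchy--Schwarz this is $\lesssim_\varepsilon\|f\|_{W^{-\frac{d-2}{2}+\varepsilon,2}}$, and the $\varepsilon$ loss in the statement comes precisely from this summation. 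The hypothesis $d\ge3$ enters because the resulting order $\frac{d-2}{2}-\varepsilon$ must be positive; for $d=2$ the estimate gives no gain and is not useful. I expect the main obstacle to be Step 1's derivative bound. One must verify carefully that $\partial_tR_t$, localized to frequency $2^k$, is genuinely an FIO of order $-\frac{d-1}{2}+1$ whose canonical relation satisfies the nondegeneracy condition uniformly in $t\in I$. One must also check that the frequency localizations in $x$ and $y$ match up to rapidly decaying errors, so that the factor $\tau$ costs exactly $2^k$. To handle this, I would first write the kernel in local coordinates adapted to $\nabla_y\phi$ and use stationary phase to control the off-diagonal frequency interactions.
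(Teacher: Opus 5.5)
Your proposal is correct and follows essentially the same route as the paper's justification of the maximal estimate in the proof of the variable-level theorem: a one-dimensional Sobolev inequality in $t$ applied to $R_tP_jf(x)$, the uniform fixed-time Phong--Stein bound, the fact that $\partial_t$ costs a factor $2^j$ at frequency $2^j$ (giving $2^{-j\frac{d-2}{2}}$), and an $\varepsilon$-loss absorbed when summing over frequencies. Your version of the one-dimensional inequality keeps the averaged term $\frac{1}{|I|}\int_I|g|^2\,dt$, which is the correct form (the paper's product-only inequality fails for constant functions, though the omitted term is harmless here), and your reading of the statement as a $W^{-\frac{d-2}{2}+\varepsilon,2}\to L^2$ bound is exactly the frequency-localized form the paper actually uses.
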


Finally, we recall Frostman's lemma, the definition of a $1$-rectifiable set, and the area formula for Lipschitz functions. Let $\hd$ denote the Hausdorff dimension and $\mathcal{H}^s$ denote the $s$-dimensional Hausdorff measure.
\begin{proposition}[Frostman's Lemma]{}
    Let $E\subset\mathbb R^d$ be a Borel set. Then $\mathcal{H}^s(E)>0$ if and only if there exists a compactly supported Borel measure $\mu$ such that $\operatorname{supp}\mu\subset E$ and 
    \[\mu(B(x,r))\lesssim r^s,\quad\forall\,x\in\mathbb R^d,r>0.\]
\end{proposition}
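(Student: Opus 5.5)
The plan is to prove the two implications separately. The direction from the measure to positive Hausdorff measure is the mass distribution principle. The converse uses a dyadic construction on compact sets, followed by a reduction from Borel sets to compact sets.

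\emph{Measure implies $\mathcal H^s(E)>0$.} Suppose $\mu$ is as in the statement and $\mu\neq 0$, which is implicit in the statement. Let $\{U_i\}$ be any countable cover of $E$ by sets of diameter $r_i$. Each $U_i$ is contained in a ball of radius $r_i$ centered at a point of $U_i$. Hence
$$\mu(E)\le \sum_i \mu(U_i)\lesssim \sum_i r_i^s .$$
Taking the infimum over all covers gives $\mathcal H^s_\infty(E)\gtrsim \mu(E)>0$. Since $\mathcal H^s\ge \mathcal H^s_\infty$, this yields $\mathcal H^s(E)>0$.

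\emph{$\mathcal H^s(E)>0$ implies a measure exists, for compact $E$.} Assume first that $E$ is compact. After rescaling, we may take $E\subset[0,1]^d$. Fix $n$, and start at level $n$ of the dyadic grid.
\begin{itemize}
\item Give each dyadic cube $Q$ of side $2^{-n}$ meeting $E$ the mass $2^{-ns}$, spread uniformly over $Q$.
\item Move up one level at a time. For each dyadic cube $Q$ of side $2^{-k}$, if the current mass of $Q$ exceeds $\ell(Q)^s$, multiply the measure on $Q$ by the factor that makes $\mu_n(Q)=\ell(Q)^s$.
\end{itemize}
The resulting measure $\mu_n$ has the following properties.
\begin{itemize}
\item $\mu_n(Q)\le \ell(Q)^s$ for every dyadic cube $Q$ of side at least $2^{-n}$.
\item Every point of $E$ lies in some dyadic cube $Q$ with $\mu_n(Q)=\ell(Q)^s$. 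This holds because the level-$n$ cube containing the point satisfies the equality at the start, and later rescalings preserve equality in some ancestor.
\end{itemize}
Choose the maximal such cubes. They are disjoint and cover $E$, so
$$\|\mu_n\|=\sum \ell(Q)^s\gtrsim \mathcal H^s_\infty(E).$$
The last inequality uses the comparability of dyadic net measures with $\mathcal H^s_\infty$, and $\mathcal H^s_\infty(E)>0$ whenever $\mathcal H^s(E)>0$.

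Next, pass to the limit. Normalize the masses and take a weak-$*$ limit $\mu$ along a subsequence. The limit is nonzero by the mass lower bound above. Its support lies in $E$, because $\mu_n$ is supported in the $\sqrt d\,2^{-n}$-neighborhood of the compact set $E$. The dyadic bound passes to the limit, after a harmless enlargement of the cubes to handle closed cubes. Finally, any ball $B(x,r)$ is covered by at most $C_d$ dyadic cubes of side comparable to $r$. This gives $\mu(B(x,r))\lesssim r^s$.

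\emph{Reduction from Borel to compact.} For general Borel $E$ with $\mathcal H^s(E)>0$, I would invoke the Davies--Howroyd theorem. It provides a compact $K\subset E$ with $0<\mathcal H^s(K)$; one may even arrange $\mathcal H^s(K)<\infty$, though this is not needed. Applying the compact case to $K$ finishes the proof.

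I expect this last step to be the main obstacle, since it is a genuinely nontrivial theorem. The compact construction itself is routine bookkeeping. In this paper $E$ is always compact, so for the applications one could simply restrict the lemma to compact sets and avoid the reduction entirely.
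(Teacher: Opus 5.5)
Your proof is correct. The paper gives no proof of this proposition: it records Frostman's lemma in the preliminaries as a known tool. It only ever uses the direction ``$\mathcal H^a(E)>0$ yields a Frostman measure of exponent $a$'' for compact $E$ with $1<a<\dim_{\mathcal H}(E)$, so your compact-case argument already covers everything the paper needs.

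Your route is the standard one. The easy direction is the mass distribution principle, and for compact sets you use Frostman's dyadic mass-capping construction. The passage from Borel to compact sets goes through Davies--Howroyd, or equivalently through capacitability of $\mathcal H^s_\infty$ or of dyadic net content.

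Two of your side remarks are worth keeping.
\begin{itemize}
\item The statement as written needs $\mu\neq 0$, since the zero measure satisfies the ball condition trivially.
\item The Borel reduction is not a removable technicality. Given such a $\mu$, the set $\operatorname{supp}\mu$ is a compact subset of $E$ which, by your first direction, has positive $\mathcal H^s$-measure. So the Borel form of the proposition genuinely contains the compact-subset theorem you invoke, and citing it is the right move.
\end{itemize}

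Two small points do not affect correctness.
\begin{itemize}
\item In the mass distribution step, use balls of radius $2r_i$ if the ball condition is stated for open balls.
\item In the compact step you only need the trivial bound $\mathcal H^s_\infty(E)\lesssim\sum \ell(Q)^s$, which holds because each cube has diameter $\sqrt d\,\ell(Q)$. Full comparability of net measures with $\mathcal H^s_\infty$ is not required.
\end{itemize}
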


\begin{definition}[Rectifiability]{}
    A set $E\subset\mathbb R^d$ is called \textit{$1$-rectifiable} if there exists a countable collection of subsets $\{A_n\}_n$ of $\mathbb R$ and Lipschitz functions $f_n:A_n\to\mathbb R^d$ such that
    \[\mathcal{H}^1\left(E\setminus\bigcup\limits_{n=1}^\infty f_n(A_n)\right)=0.\]
\end{definition}

\begin{proposition}[Area Formula]{}
    Let $f:\mathbb R^m\to\mathbb R^d,m\leq d$ be a Lipschitz function. Then for every Borel set $A\subset \mathbb R^m$,
    \[\int_A J_f(x)\,dx=\int_{f(A)} N(f,A,y)\,d\mathcal{H}^m(y),\]
    where $J_f(x)=\det(Df(x)^TDf(x))^{1/2}$ and $N(f,A,y)=\mathcal{H}^0(A\cap f^{-1}(y))$.
\end{proposition}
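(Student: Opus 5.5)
The plan is to follow the classical Federer argument: reduce to the case where $f$ is almost linear on small pieces, and treat separately the set where the Jacobian vanishes. First, by Rademacher's theorem $f$ is differentiable at $\mathcal L^m$-almost every $x$. So $J_f$ is defined a.e. and is Borel measurable, being a limit of difference quotients. Next we check that $y\mapsto N(f,A,y)$ is $\mathcal H^m$-measurable. To do this, partition $\mathbb R^m$ into dyadic cubes $Q$ of side $2^{-k}$ and write
$$
N(f,A,y)=\lim_{k\to\infty}\sum_Q \chi_{f(A\cap Q)}(y).
$$
Each $f(A\cap Q)$ is $\mathcal H^m$-measurable: for $A$ compact the image is compact, and the general Borel case follows by inner regularity together with the fact that Lipschitz images of $\mathcal L^m$-null sets are $\mathcal H^m$-null. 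Both sides of the formula are then countably additive in $A$, so it suffices to prove it for $A$ contained in suitable pieces of a countable Borel partition.

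\textbf{Linear case.} If $f=L$ is linear and injective, then $\mathcal H^m(L(A))=|\det(L^TL)|^{1/2}\,\mathcal L^m(A)$. To see this, write $L=O\circ S$ with $S=(L^TL)^{1/2}$ symmetric and $O$ an isometry $\mathbb R^m\to\mathbb R^d$. The Hausdorff measure $\mathcal H^m$ is invariant under isometries and coincides with $\mathcal L^m$ on $m$-planes, which handles $O$, while $S$ scales $\mathcal L^m$ by $\det S$.

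\textbf{Main step (the key obstacle).} Fix $t>1$ and restrict to $A^+=\{x\in A: J_f(x)>0\}$. We produce a countable Borel partition $A^+=\bigcup_j B_j$ with invertible linear maps $T_j$ satisfying:
\begin{itemize}
\item $\operatorname{Lip}\bigl(f\circ T_j^{-1}|_{T_j(B_j)}\bigr)\le t$ and $\operatorname{Lip}\bigl(T_j\circ (f|_{B_j})^{-1}\bigr)\le t$, so in particular $f$ is injective on each $B_j$;
\item $t^{-m}|\det T_j|\le J_f\le t^m|\det T_j|$ on $B_j$.
\end{itemize}
The construction uses a countable dense family of linear maps $T$ and the sets of points where $Df(x)$ is $t$-close to $T$ and $f$ is $t$-bi-Lipschitz relative to $T$ at scale $1/k$. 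The pieces are then cut into sets of diameter less than $1/k$. This is the delicate part, requiring care with the uniformity of differentiability, and I expect it to take most of the effort.

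On each $B_j$, the linear case together with the Lipschitz bounds (which change $\mathcal H^m$ by factors at most $t^{\pm m}$) gives
$$
t^{-2m}\int_{B_j}J_f\le \mathcal H^m(f(B_j))\le t^{2m}\int_{B_j}J_f.
$$
Summing over $j$, since $N(f,A^+,y)=\sum_j\chi_{f(B_j)}(y)$, and letting $t\downarrow1$ yields the formula on $A^+$.

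\textbf{Degenerate set.} It remains to show $\mathcal H^m(f(Z))=0$ for $Z=\{x\in A: J_f(x)=0\}$; the set where $f$ is not differentiable is null and its image is $\mathcal H^m$-null by the Lipschitz property. Fix $\varepsilon>0$ and apply the previous step to the map $x\mapsto (f(x),\varepsilon x)\in\mathbb R^{d+m}$, whose Jacobian is positive and is $O(\varepsilon)$ on $Z$. Projecting back to $\mathbb R^d$ does not increase $\mathcal H^m$, so $\mathcal H^m(f(Z))\lesssim\varepsilon\,\mathcal L^m(Z)$. Letting $\varepsilon\to0$ gives $\mathcal H^m(f(Z))=0$, which makes both sides vanish on $Z$ and completes the proof.
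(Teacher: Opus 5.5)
The paper does not prove this proposition. It is recalled in the preliminaries as a standard fact and then used as a black box: in the rectifiable endpoint theorem (with $m=1$ for $\gamma$ and $m=d$ for $F(t,u)=\Phi(\gamma(t),u)$) and in the failure-of-interior example. So there is no competing argument to compare with.

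Your outline is the classical Federer/Evans--Gariepy proof: a bi-Lipschitz partition of $\{J_f>0\}$, then the lift $x\mapsto(f(x),\varepsilon x)$ to handle $\{J_f=0\}$. It is correct, and it gives a self-contained justification of every use the paper makes.

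When you write it out, make three points explicit:
\begin{enumerate}
\item The dyadic cubes must be half-open. With closed cubes the counting limit overcounts preimages lying on cube boundaries.
\item $\mathcal H^m$ must be normalized so that it equals $\mathcal L^m$ on $m$-planes. Otherwise the formula holds only up to a dimensional constant.
\item The bound $\mathcal H^m(f(Z))\lesssim\varepsilon\,\mathcal L^m(Z)$ should be applied to $Z\cap B(0,R)$, so that the right side is finite, and then you take a countable union over $R$.
\end{enumerate}

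Your sketched main step goes through in the form you stated it.
\begin{itemize}
\item \emph{Lipschitz bounds and injectivity.} Suppose every $x$ in a piece satisfies $t^{-1}|T_j(a-x)|\le|f(a)-f(x)|\le t|T_j(a-x)|$ for $|a-x|<1/k$, and the piece has diameter $<1/k$. Then both Lipschitz bounds and injectivity on that piece are immediate.
\item \emph{Jacobian comparison.} Letting $a\to x$ gives $t^{-1}|T_jv|\le|Df(x)v|\le t|T_jv|$ for all $v$, which yields $t^{-m}|\det T_j|\le J_f(x)\le t^m|\det T_j|$.
\item \emph{Covering.} Every point with $J_f>0$ lies in some such set. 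This is because $(Df^TDf)^{1/2}$ is invertible and can be approximated from a countable dense subset of invertible maps.
\item \emph{Measurability.} The sets are Borel, as the countable-additivity reduction requires. The scale condition defines a closed set, which is then intersected with the Borel set $A^+$.
\end{itemize}
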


\section{The Fixed Level Result}

\begin{theorem}[]{}\label{thm:fixed level}
Suppose $\phi\in C^\infty(\mathbb R^d\times\mathbb R^d),d\geq 2$ satisfies the rotational curvature condition at $1$. If $E\subset\mathbb R^d$ is a compact set with $\dim_{\mathcal H}(E)>1$, then
$$
\left| \bigcup_{x \in E} \{y:\phi(x,y)=1\} \right|>0.
$$
\end{theorem}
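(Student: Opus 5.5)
We argue by duality, testing the Radon transform against a Frostman measure on $E$. Fix $1<s<\dim_{\mathcal H}(E)$ and let $\mu$ be a Frostman measure supported on $E$ with $\mu(B(x,r))\lesssim r^s$ (Frostman's Lemma). Set $F=\bigcup_{x\in E}\{y:\phi(x,y)=1\}$ and suppose, for contradiction, that $|F|=0$. After localizing, assume the cutoff $\psi$ in Definition \ref{defn: Radon Trans} is positive near some point $(x_0,y_0)$ with $x_0\in\operatorname{supp}\mu$ and $\phi(x_0,y_0)=1$, and that $\nabla_y\phi\neq0$ there. This is possible because the rotational curvature condition forces $\nabla_x\phi,\nabla_y\phi\neq 0$ on the incidence set. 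Restrict $\mu$ to a small ball around $x_0$ so that it still has positive mass. Thicken: for $\delta>0$ let $F_\delta$ be the $\delta$-neighborhood of $F$ (intersected with a fixed compact set) and $f_\delta=\chi_{F_\delta}$. By compactness, $|F_\delta|\to|F|=0$.

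\textbf{Lower bound.} For $x\in\operatorname{supp}\mu$, the whole surface $\{y:\phi(x,y)=1\}$ lies in $F\subset F_\delta$. Hence
$$R_1f_\delta(x)=\int_{\phi(x,y)=1}\psi(x,y)\,d\sigma_x(y)\ge c>0$$
uniformly for $x$ near $x_0$. Integrating gives $\int R_1 f_\delta\,d\mu\ge c\,\mu(\mathbb R^d)>0$, independent of $\delta$.

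\textbf{Upper bound.} Using Plancherel,
$$\int R_1f_\delta\,d\mu=\int \widehat{R_1f_\delta}(\xi)\overline{\widehat\mu(\xi)}\,d\xi\le \|R_1f_\delta\|_{W^{\frac{d-1}{2},2}}\Big(\int|\widehat\mu(\xi)|^2(1+|\xi|)^{-(d-1)}d\xi\Big)^{1/2}.$$
The second factor is the energy $I_{1}(\mu)$ up to constants, since $\int|\widehat\mu|^2|\xi|^{-(d-1)}d\xi\approx\iint|x-y|^{-1}d\mu\,d\mu$. This energy is finite because $s>1$ (the standard computation $I_\alpha(\mu)<\infty$ for $\alpha<s$). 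By the Phong--Stein theorem quoted above, the first factor is at most $C\|f_\delta\|_{L^2}=C|F_\delta|^{1/2}\to0$. This contradicts the $\delta$-independent lower bound, so $|F|>0$.

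\textbf{Main obstacle.} The main technical point is making the pairing $\int R_1 f\,d\mu$ legitimate, since $\mu$ is only a measure. I would justify it by mollifying $\mu$ to $\mu_\epsilon=\mu*\rho_\epsilon$. Then $R_1f_\delta$ is continuous, because $f_\delta$ is bounded and the surfaces vary smoothly, so $\int R_1f_\delta\,d\mu_\epsilon\to\int R_1f_\delta\,d\mu$. Meanwhile the energies of $\mu_\epsilon$ satisfy $\int|\widehat{\mu_\epsilon}|^2(1+|\xi|)^{-(d-1)}\le\int|\widehat\mu|^2(1+|\xi|)^{-(d-1)}$ uniformly in $\epsilon$. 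A secondary point is ensuring the lower bound holds uniformly on the support of the localized $\mu$. This only requires continuity of $\psi$ and of the surface measure in $x$, which follows from the implicit function theorem and $\nabla_y\phi\neq0$.
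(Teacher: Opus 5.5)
Your proof is correct, and it reaches the paper's key estimate by a shorter road. Both arguments rest on the same fact: $\nu=R^*\mu$ lies in $L^2$ because $R$ gains $\frac{d-1}{2}$ derivatives while $\mu$ has Frostman exponent above $1$.

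The paper proves this in four steps:
\begin{enumerate}
\item it mollifies $\nu$ and expands $\int Rg_\epsilon\,d\mu$ into Littlewood--Paley pieces $\int Rg_\epsilon^j\,d\mu^k$;
\item it bounds the near-diagonal terms by $\|RP_jf\|_2\lesssim 2^{-j\frac{d-1}{2}}\|P_jf\|_2$ together with $\|\mu^k\|_2\lesssim 2^{k\frac{d-a}{2}}$;
\item it invokes almost-orthogonality of canonical-graph Fourier integral operators for $|j-k|>K$;
\item it passes to a weak $L^2$ limit.
\end{enumerate}

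You instead use the mapping property $R_1:L^2\to W^{\frac{d-1}{2},2}$ exactly as quoted in the preliminaries, plus a single weighted Cauchy--Schwarz on the Fourier side. This packages the diagonal sum into the energy $\int|\widehat\mu|^2|\xi|^{-(d-1)}\,d\xi\approx I_1(\mu)$. The off-diagonal analysis disappears because the Sobolev norm of $R_1f$ already tracks the output frequencies.

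By duality, your inequality $\int R_1f\,d\mu\lesssim\|f\|_2\,I_1(\mu)^{1/2}$ is the statement $\|\nu\|_2\lesssim I_1(\mu)^{1/2}$, so nothing is lost. Testing it on $f_\delta$ gives directly $|F|\gtrsim\mu(\mathbb R^d)^2/I_1(\mu)$, which recovers the paper's subsequent corollary. The paper's dyadic bookkeeping is mainly useful when only a frequency-localized bound for $RP_j$ is available; for this theorem your route is cleaner.

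Two small technical points. First, for bounded measurable $f$ the function $R_1f$ need not be continuous, so your continuity claim needs adjusting. You can replace $f_\delta$ by a continuous $g_\delta$ with $\chi_F\le g_\delta\le\chi_{F_\delta}$ on the relevant compact set. Alternatively, note that $R_1\chi_{F_\delta}$ is lower semicontinuous by Fatou, which gives $\int R_1f_\delta\,d\mu\le\liminf_{\epsilon}\int R_1f_\delta\,d\mu_\epsilon$, the only direction you need. Second, the convergence $|F_\delta|\to0$ uses that $F$ is closed, which follows from compactness of $E$ and continuity of $\phi$.
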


\begin{proof}
The idea of the proof is to turn the union of hypersurfaces into an incidence measure. This measure records how often a point $y$ lies on a hypersurface of the form $\{y:\phi(x,y)=1\}$ with $x\in E$. If the union were too small, this incidence measure would have to concentrate on a set of small Lebesgue measure. The purpose of the argument below is to show that such concentration is impossible: after mollification, the incidence measure has uniformly bounded $L^2$ norm. This forces the measure to spread over a set of positive Lebesgue measure.

We begin with a Littlewood--Paley decomposition. Let $\eta_0 \in C_c^\infty(\mathbb R^d)$ be radial with
$1_{B(0,2)}\le\eta_0\le 1_{B(0,4)}$. Let $\eta\in C_c^\infty(\mathbb R^d)$ be supported where $\frac{1}{2}\le |\xi|\le 4$ and satisfy
$$
\eta_0(\xi)+\sum_{j=1}^\infty \eta_j(\xi)=1,
$$
where $\eta_j(\xi)=\eta(2^{-j}\xi)$ for all $j\ge 1$.

Define
$$
\widehat{P_j f}(\xi)=\eta_j(\xi)\widehat{f}(\xi), \quad j\ge1,
$$
and
$$
\widehat{P_0 f}(\xi)=\eta_0(\xi)\widehat{f}(\xi).
$$

Then
$$
f=\sum_{j=0}^\infty P_j f.
$$

Since $\hd E>1$, there exists a compactly supported measure $\mu$ with $\operatorname{supp}\mu\subset E$ and exponent $1<a<\hd E$, so
$$
\mu(B(x,r))\lesssim r^a,\quad\forall\,x\in \mathbb R^d,r>0.
$$
Define $\mu^j$ as
$$
\widehat{\mu^j}=\eta_j \widehat{\mu}.
$$

\begin{lemma}
For all $j\ge 0$,
$$
\|\mu^j\|_2 \lesssim 2^{j\frac{d-a}{2}}.
$$
\end{lemma}

Since $\mu^j$ is obtained by applying a smooth Fourier multiplier to $\mu$, it is absolutely continuous with respect to Lebesgue measure and belongs to $L^2(\mathbb R^d)$.

\begin{proof}
For $j\ge 1$,
$$
\|\mu^j\|_2^2 = \int |\widehat{\mu}(\xi)|^2 \eta_j(\xi)^2 d\xi.
$$

Let $\varphi$ be supported in an annulus and equal to $1$ on the support of $\eta$. Then
$$
\eta_j^2(\xi) \le \varphi(2^{-j}\xi),
$$
so
$$
\|\mu^j\|_2^2
\le  \int |\widehat{\mu}(\xi)|^2 \varphi(2^{-j}\xi) d\xi.
$$

By Fourier inversion,
$$
\|\mu^j\|_2^2
\le2^{jd} \iint \widehat{\varphi}(2^j(x-y)) d\mu(x)d\mu(y).
$$

Since $\widehat{\varphi}$ decays rapidly, there exists $M>a$ such that
$$
|\widehat{\varphi}(2^j(x-y))|\le C_M(1+2^j|x-y|)^{-M}.
$$

Thus
$$
\|\mu^j\|_2^2
\le C_M 2^{jd} \iint (1+2^j|x-y|)^{-M} d\mu(x)d\mu(y).
$$

\begin{align*}
    \|\mu^j\|_2^2&\le C_M 2^{jd} \iint (1+2^j|x-y|)^{-M} d\mu(x)d\mu(y)\\
    &=C_M 2^{jd}\iint_{|x-y|\le 2^{-j}} (1+2^j|x-y|)^{-M} d\mu(x)d\mu(y)\\
    &+C_M 2^{jd}\sum_{l=0}^\infty\iint_{2^l<2^j|x-y|\le 2^{l+1}} (1+2^j|x-y|)^{-M} d\mu(x)d\mu(y)\\
    &=I+II.
\end{align*}
Since $\mu$ is a Frostman measure with exponent $a$, 
\[I\lesssim 2^{j(d-a)}\]
and
\[II\lesssim 2^{jd}\sum_{l=0}^{\infty} 2^{a(l-j)}2^{-lM}\lesssim 2^{j(d-a)}.\]



Thus
$$
\|\mu^j\|_2^2 \lesssim 2^{j(d-a)}.
$$

Taking square root gives the result.
\end{proof}

We now define the incidence measure $\nu$ by
$$
\int f(y)\, d\nu(y) = \int Rf(x)\, d\mu(x)
$$
for all $f\in C_c(\mathbb R^d)$, where $R$ is the Radon transform from Definition \ref{defn: Radon Trans}. 

Using the definition of $R$, this can be written as
$$
\int f(y)\, d\nu(y)
=
\iint_{\phi(x,y)=1} f(y)\psi(x,y)\, d\sigma_x(y)\, d\mu(x).
$$

Since $\psi \ge 0$ and $d\sigma_x$ is a positive measure, we have $Rf(x)\ge 0$ whenever $f\ge 0$. We now justify that the functional
$$
f \mapsto \int Rf(x)\,d\mu(x)
$$
defines a Radon measure.

Since $\psi$ is compactly supported and smooth, and the hypersurfaces $\{y:\phi(x,y)=1\}$ vary smoothly in $x$, there exists a constant $C>0$ such that
$$
\int_{\phi(x,y)=1} \psi(x,y)\,d\sigma_x(y) \le C
$$
uniformly in $x$.

It follows that for every $f \in C_c(\mathbb R^d)$,
$$
|Rf(x)| \le C \|f\|_\infty.
$$
Therefore
$$
\left| \int Rf(x)\,d\mu(x) \right|
\le
C \|f\|_\infty \mu(E),
$$
so the functional is bounded on $C_c(\mathbb R^d)$.

Since it is also linear and positive, the Riesz representation theorem implies that there exists a Radon measure $\nu$ such that
$$
\int f(y)\,d\nu(y)=\int Rf(x)\,d\mu(x)
$$
for all $f \in C_c(\mathbb R^d)$. 

Moreover, $\nu$ is supported on
$$
\bigcup_{x \in E} \{y : \phi(x,y)=1\}.
$$
Indeed, if $f$ vanishes on this set, then $Rf(x)=0$ for all $x$, and hence $\int f\,d\nu=0$. 

We now mollify $\nu$. Let $\rho \in C^\infty_0(\mathbb R^d)$ be non-negative, radial, supported in the unit ball, and satisfy $\|\rho\|_1=1$.

Define
$$
\rho_\epsilon(x) = \epsilon^{-d}\rho(x/\epsilon),
$$
and
$$
\lambda_\epsilon = \nu * \rho_\epsilon.
$$

Then $\lambda_\epsilon$ is smooth and
$$
\|\lambda_\epsilon\|_2^2
=
\int (\nu * \rho_\epsilon)(y)(\nu * \rho_\epsilon)(y)\, dy.
$$

Since $\rho$ is radial, we have $\rho_\epsilon(x)=\rho_\epsilon(-x)$, and therefore
$$
\|\lambda_\epsilon\|_2^2
=
\int (\nu * \rho_\epsilon * \rho_\epsilon)(y)\, d\nu(y).
$$

Define
$$
g_\epsilon = \nu * \rho_\epsilon * \rho_\epsilon.
$$

Then
$$
\|\lambda_\epsilon\|_2^2
=
\int g_\epsilon(y)\, d\nu(y)
=
\int Rg_\epsilon(x)\, d\mu(x).
$$

Under the rotational curvature condition, the operator $R$ is a Fourier integral operator of order $-\frac{d-1}{2}$, and therefore satisfies the estimate
$$
\|R P_j f\|_2 \lesssim 2^{-j\frac{d-1}{2}} \|P_j f\|_2.
$$
which follows from the Fourier integral operator theory of Phong and Stein \cite{PS,PhongStein}.

We now decompose $g_\epsilon$ and $\mu$ into Littlewood--Paley pieces.

Write
$$
g_\epsilon = \sum_{j=0}^{\infty} g_\epsilon^j, \quad g_\epsilon^j = P_j g_\epsilon,
$$
and
$$
\mu = \sum_{k=0}^{\infty} \mu^k.
$$

Then
$$
\|\lambda_\epsilon\|_2^2
=
\sum_{j,k \ge 0} \int Rg_\epsilon^j(x)\, d\mu^k(x).
$$

We split the sum into two parts:
$$
\sum_{|j-k|\le K} \int Rg_\epsilon^j\, d\mu^k
\quad \text{and} \quad
\sum_{|j-k|>K} \int Rg_\epsilon^j\, d\mu^k.
$$

We first estimate the contribution of the comparable frequencies $|j-k|\le K$.

By Cauchy--Schwarz,
$$ \left| \int Rg_\epsilon^j(x)\, d\mu^k(x) \right|
\le
\|R g_\epsilon^j\|_2 \|\mu^k\|_2.
$$

Using the Fourier integral operator estimate,
$$
\|R P_j f\|_2 \lesssim 2^{-j\frac{d-1}{2}} \|P_j f\|_2.
$$

Since $g_\epsilon^j = P_j g_\epsilon$, we obtain
$$
\|R g_\epsilon^j\|_2 = \|R P_j g_\epsilon\|_2 \lesssim 2^{-j\frac{d-1}{2}} \|g_\epsilon^j\|_2.
$$

Using the Frostman estimate,
$$
\|\mu^k\|_2 \lesssim 2^{k\frac{d-a}{2}}.
$$

Thus
$$
\left| \int Rg_\epsilon^j\, d\mu^k \right|
\lesssim 2^{-j\frac{d-1}{2}} 2^{k\frac{d-a}{2}} \|g_\epsilon^j\|_2.
$$

If $|j-k|\le K$, then $k$ may be replaced by $j$ up to constants, and so
$$
\left| \int Rg_\epsilon^j\, d\mu^k \right|
\lesssim 2^{j\frac{1-a}{2}} \|g_\epsilon^j\|_2.
$$

Summing over $k$ with $|j-k|\le K$ gives
$$
\sum_{|j-k|\le K}
\left| \int Rg_\epsilon^j\, d\mu^k \right|
\lesssim
 2^{j\frac{1-a}{2}} \|g_\epsilon^j\|_2.
$$

Summing over $j$,
$$
\sum_{|j-k|\le K}
\left| \int Rg_\epsilon^j\, d\mu^k \right|
\lesssim
 \sum_{j=0}^{\infty} 2^{j\frac{1-a}{2}} \|g_\epsilon^j\|_2.
$$

Since $a>1$, we have $2^{j\frac{1-a}{2}} \in \ell^2$, and therefore by Cauchy--Schwarz,
$$
\sum_{j=0}^{\infty} 2^{j\frac{1-a}{2}} \|g_\epsilon^j\|_2
\le
\left(\sum_{j=0}^{\infty} 2^{j(1-a)}\right)^{1/2}
\left(\sum_{j=0}^{\infty} \|g_\epsilon^j\|_2^2\right)^{1/2}
\lesssim
 \|g_\epsilon\|_2.
$$

We now estimate the contribution of the off-diagonal terms $|j-k|>K$. We use the almost-orthogonality property for Fourier integral operators whose canonical relation is locally a canonical graph. Under the rotational curvature hypothesis, the canonical relation associated to $R$ is nondegenerate and locally a canonical graph; see \cite{Sogge,Stein}. It follows from the standard almost-orthogonality principle for Fourier integral operators whose canonical relation is locally a canonical graph (see, for example, \cite[Chapter VIII]{Sogge}) that frequency localized pieces with widely separated frequencies have rapidly decaying interaction. Therefore, for every $N>0$, there exists $C_N>0$ such that whenever $|j-k|>K$,
$$
\left| \int Rg_\epsilon^j(x)\, d\mu^k(x) \right|
\le
C_N 2^{-N\max(j,k)} \|g_\epsilon^j\|_2 \|\mu^k\|_2.
$$

Using the Frostman estimate,
$$
\|\mu^k\|_2 \lesssim 2^{k\frac{d-a}{2}},
$$
we obtain
$$
\left| \int Rg_\epsilon^j\, d\mu^k \right|
\lesssim
 2^{-N\max(j,k)} 2^{k\frac{d-a}{2}} \|g_\epsilon^j\|_2.
$$

Choose $N$ sufficiently large so that
$$
N > \frac{d-a}{2} + 2.
$$

We consider two cases.

If $k \le j$, then
$$
2^{-N\max(j,k)} 2^{k\frac{d-a}{2}}
=
2^{-Nj} 2^{k\frac{d-a}{2}}
\le
2^{-Nj} 2^{j\frac{d-a}{2}}
=
2^{-j\left(N-\frac{d-a}{2}\right)}
\le
 2^{-2j}.
$$

If $k > j$, then
$$
2^{-N\max(j,k)} 2^{k\frac{d-a}{2}}
=
2^{-Nk} 2^{k\frac{d-a}{2}}
=
2^{-k\left(N-\frac{d-a}{2}\right)}
\le
 2^{-2k}.
$$

In either case,
$$
2^{-N\max(j,k)} 2^{k\frac{d-a}{2}}
\le
 2^{-2\max(j,k)}.
$$

Thus
$$
\left| \int Rg_\epsilon^j\, d\mu^k \right|
\lesssim
 2^{-2\max(j,k)} \|g_\epsilon^j\|_2.
$$

We now sum over $j$ and $k$.

Fix $j$. Split the sum over $k$ into two parts.

If $k \le j$, then
$$
\sum_{k \le j} 2^{-2\max(j,k)} = \sum_{k \le j} 2^{-2j} \lesssim  j2^{-2j}\le 2^{-j}.
$$

If $k > j$, then
$$
\sum_{k > j} 2^{-2\max(j,k)} = \sum_{k > j} 2^{-2k} \lesssim 2^{-j}.
$$

Therefore
$$
\sum_{k=0}^{\infty} 2^{-2\max(j,k)} \lesssim 2^{-j}.
$$

Summing in $j$,
$$
\sum_{|j-k|>K} \left| \int Rg_\epsilon^j\, d\mu^k \right|
\lesssim
 \sum_{j=0}^{\infty} 2^{-j} \|g_\epsilon^j\|_2.
$$

By Cauchy--Schwarz,
$$
\sum_{j=0}^{\infty} 2^{-j} \|g_\epsilon^j\|_2
\le
\left(\sum_{j=0}^{\infty} 2^{-2j}\right)^{1/2}
\left(\sum_{j=0}^{\infty} \|g_\epsilon^j\|_2^2\right)^{1/2}
\lesssim
 \|g_\epsilon\|_2.
$$

We obtain
$$
\sum_{j,k} \left| \int Rg_\epsilon^j\, d\mu^k \right|
\lesssim
 \|g_\epsilon\|_2.
$$

Since $g_\epsilon = \lambda_\epsilon * \rho_\epsilon$ and $\|\rho_\epsilon\|_1=1$ with bounded norm, Young's inequality gives
$$
\|g_\epsilon\|_2 \le  \|\lambda_\epsilon\|_2.
$$

We obtain
$$
\|\lambda_\epsilon\|_2^2 \lesssim \|\lambda_\epsilon\|_2.
$$

Thus $\|\lambda_\epsilon\|_2$ is bounded uniformly in $\epsilon$.

Geometrically, the uniform $L^2$ bound prevents the incidence measure from concentrating on a set of very small Lebesgue measure. Indeed, if most of the mass of $\nu$ were confined to a tiny region, then the mollified densities $\lambda_\epsilon$ would develop large peaks as $\epsilon\to0$, causing the $L^2$ norm to blow up. The boundedness of $\|\lambda_\epsilon\|_2$ therefore forces the incidence measure to spread across a set of positive Lebesgue measure. This is the analytic expression of the underlying geometric phenomenon that curved hypersurfaces cannot overlap too efficiently.

By weak compactness in $L^2$, there exists a subsequence $\lambda_{\epsilon_k}$ and a function $h \in L^2(\mathbb R^d)$ such that $\lambda_{\epsilon_k}$ converges weakly in $L^2$ to $h$. On the other hand, since $\lambda_\epsilon = \nu * \rho_\epsilon$, we have
$$
\int f(y)\lambda_\epsilon(y)\,dy \to \int f(y)\,d\nu(y)
$$
for every continuous compactly supported function $f$. Passing to the same subsequence $\epsilon_k$, we therefore obtain
$$
\int f(y)\lambda_{\epsilon_k}(y)\,dy \to \int f(y)\,d\nu(y),
$$
and also
$$
\int f(y)\lambda_{\epsilon_k}(y)\,dy \to \int f(y)h(y)\,dy.
$$
Thus
$$
\int f(y)h(y)\,dy = \int f(y)\,d\nu(y)
$$
for all $f \in C_c(\mathbb R^d)$, and hence $d\nu = h(y)\,dy$.

Since $\nu$ is a nonzero measure, there exists a non-negative $f \in C_c(\mathbb R^d)$ such that
$$
\int f(y)\,d\nu(y)>0.
$$
It follows that
$$
\int f(y)h(y)\,dy>0,
$$
and hence $h$ is not identically zero. Therefore the set $\{y : h(y)\neq 0\}$ has positive Lebesgue measure, and this set is contained in the support of $\nu$.
\end{proof}

\begin{corollary} 
Under the assumptions of Theorem \ref{thm:fixed level}, let $\nu$ be the incidence measure, and let $h\in L^2(\mathbb R^d)$ denote its density. Then
$$
\nu(\mathbb R^d)^2
\le
\left|
\bigcup_{x \in E}\{y:\phi(x,y)=1\}
\right|
\cdot
\|h\|_2^2.
$$
Consequently,
$$
\left|
\bigcup_{x \in E}\{y:\phi(x,y)=1\}
\right|
\ge
\frac{\nu(\mathbb R^d)^2}{\|h\|_2^2}.
$$
\end{corollary}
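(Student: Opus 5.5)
The plan is a single application of Cauchy--Schwarz, using that the density $h$ of $\nu$ vanishes almost everywhere off the union. Write
$$
U=\bigcup_{x\in E}\{y:\phi(x,y)=1\}.
$$
By the proof of Theorem \ref{thm:fixed level}, $\nu$ is a finite positive measure with $d\nu=h\,dy$ and $h\in L^2(\mathbb R^d)$. It is also supported in $U$.

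\textbf{Step 1: $U$ is measurable.} For fixed $y$, the set of $x\in E$ with $\phi(x,y)=1$ is compact, since $E$ is compact and $\phi$ is continuous. Hence $U$ is the projection of the compact set
$$
\{(x,y)\in E\times\mathbb R^d:\phi(x,y)=1\}
$$
intersected with a large ball. Only the portion within the support of the cutoff $\psi$ matters, and that portion is bounded. The projection of a compact set is compact, so the relevant part of $U$ is closed, and $|U|$ makes sense. I take $U$ to mean this closed set containing the support of $\nu$. This is consistent with the paper's use of the cutoff, and enlarging $U$ only weakens the desired inequality.

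\textbf{Step 2: $h=0$ a.e.\ off $U$.} Since $\nu(\mathbb R^d\setminus U)=0$ and $\nu=h\,dy$, we get $\int_{\mathbb R^d\setminus U}h\,dy=0$. I also need $h\ge 0$ a.e. This holds because $h$ is the weak $L^2$ limit of the nonnegative functions $\lambda_{\epsilon_k}$. Alternatively, $\int fh\ge0$ for all $f\ge0$ in $C_c$, which gives $h\ge0$ a.e.\ by density. Together these give $h=0$ a.e.\ on $\mathbb R^d\setminus U$.

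\textbf{Step 3: Cauchy--Schwarz.} Then
$$
\nu(\mathbb R^d)=\int h\,dy=\int h\,1_U\,dy\le \|h\|_2\,|U|^{1/2}.
$$
Squaring gives the first inequality. Since $\nu\neq0$, we have $\|h\|_2>0$, so we may divide to obtain the lower bound
$$
|U|\ge \frac{\nu(\mathbb R^d)^2}{\|h\|_2^2}.
$$
If $|U|=\infty$, the statement is trivial.

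The only delicate point is measurability of $U$ together with the identification of the support of $\nu$ inside $U$. Both are handled by the compactness argument in Step 1, so I expect no real obstacle.
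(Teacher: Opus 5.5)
Your proof is correct and is the paper's argument: $h$ vanishes a.e.\ off the union $U$, so Cauchy--Schwarz gives $\nu(\mathbb R^d)=\int h\,1_U\,dy\le |U|^{1/2}\|h\|_2$, and squaring finishes. Your extra steps make explicit what the paper leaves implicit: $U$ is closed because $E$ is compact and $\phi$ is continuous, and $h\ge 0$ a.e., so $\nu(\mathbb R^d\setminus U)=0$ forces $h=0$ a.e.\ there.
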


\begin{proof}
Since $\nu=h(y)\,dy$ and $h$ is supported on
$$
\bigcup_{x \in E}\{y:\phi(x,y)=1\},
$$
Cauchy--Schwarz gives
$$
\nu(\mathbb R^d)
=
\int h(y)\,dy
\le
\left|
\bigcup_{x \in E}\{y:\phi(x,y)=1\}
\right|^{1/2}
\|h\|_2.
$$
Squaring both sides gives the result.
\end{proof}


\subsection{Vanishing Curvature Case} We now address the case when the rotational curvature is allowed to vanish of finite order. 

\begin{corollary}\label{cor vanishing}
Suppose that the Monge--Amp\`ere determinant associated to $\phi$ does not vanish to infinite order on $\{\phi=1\}$. Let
$$
w(x,y)=
\det
\begin{pmatrix}
0 & \nabla_x \phi \\
-(\nabla_y \phi)^T & \partial^2_{xy}\phi
\end{pmatrix}.
$$
Then there exists $A>0$, depending on the finite type of the degeneracy of $w$, such that the weighted averaging operator
\begin{equation}\label{avg op}
R_{A} f(x)=\int_{\phi(x,y)=1} f(y)\,|w(x,y)|^A \psi(x,y)\,d\sigma_x(y)
\end{equation}
satisfies the $L^2$ Sobolev estimate of order $\frac{d-1}{2}$; see \cite{SoggeStein2}. 

If $E\subset \mathbb R^d$ is a compact set with $\dim_{\mathcal H}(E)>1,$
then
$$
\left|\bigcup_{x\in E}\{y:\phi(x,y)=1\}\right|>0.
$$
\end{corollary}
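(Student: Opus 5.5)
The plan is to repeat the proof of Theorem \ref{thm:fixed level}, with the operator $R$ replaced by the weighted operator $R_A$ from \eqref{avg op}. The only point where rotational curvature entered that argument was the frequency-localized estimate
$$
\|R P_j f\|_2\lesssim 2^{-j\frac{d-1}{2}}\|P_jf\|_2,
$$
together with the almost-orthogonality bound for widely separated frequencies. Everything else used only positivity, boundedness, and Frostman decay: the Riesz representation of the incidence measure, the mollification $\lambda_\epsilon=\nu*\rho_\epsilon$, the bound $\|\mu^j\|_2\lesssim 2^{j\frac{d-a}{2}}$, and the weak-compactness step.

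First, I define the weighted incidence measure $\nu_A$ by
$$
\int f\,d\nu_A=\int R_Af\,d\mu .
$$
Since $|w|^A\psi\ge0$ is smooth and bounded, the functional is positive and bounded on $C_c$. Hence $\nu_A$ is a Radon measure supported on $\bigcup_{x\in E}\{y:\phi(x,y)=1\}$.

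Second, I run the $L^2$ argument verbatim:
$$
\|\lambda_\epsilon\|_2^2=\int R_A g_\epsilon\,d\mu=\sum_{j,k}\int R_Ag^j_\epsilon\,d\mu^k .
$$
I split into $|j-k|\le K$ and $|j-k|>K$. The near-diagonal terms use the Sogge--Stein bound $R_A:L^2\to W^{\frac{d-1}{2},2}$. This gives $\|R_AP_jf\|_2\lesssim 2^{-j\frac{d-1}{2}}\|P_jf\|_2$, since $P_j$ commutes with the Bessel potential and $\|(I-\Delta)^{\frac{d-1}{4}}F\|_2\gtrsim 2^{j\frac{d-1}{2}}\|\tilde P_jF\|_2$. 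One should insert a fattened projection $\tilde P_j$ on the output side here. The sum is then controlled by $\sum_j2^{j\frac{1-a}{2}}\|g^j_\epsilon\|_2\lesssim\|\lambda_\epsilon\|_2$, using $a>1$.

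Third, I treat the off-diagonal terms, which are the main obstacle. Where $w\neq0$, the canonical relation is a local canonical graph and the cited almost-orthogonality applies. On $\{w=0\}$ it is not a graph, but the weight vanishes there. My first approach avoids microlocal almost-orthogonality entirely and uses only that $R_A$ is an FIO of order $\le0$. Its kernel is the smooth compactly supported density $\delta(\phi(x,y)-1)|w|^A\psi$, whose wave front set is contained in the conormal bundle of $\{\phi=1\}$. That conormal bundle has no zero sections, since $\nabla_x\phi$ and $\nabla_y\phi$ are nonvanishing there by the rotational structure off a finite-type set; after a smooth cutoff one may assume $\nabla_x\phi,\nabla_y\phi\ne0$ on $\operatorname{supp}\psi$. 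Consequently $\widehat{R_A}(\xi,\eta)$ is rapidly decreasing off the cone $\{|\xi|\sim|\eta|\}$, i.e.\ where $|\xi|\ge C|\eta|$ or $|\eta|\ge C|\xi|$. This yields
$$
|\langle R_AP_jf,P_kg\rangle|\le C_N2^{-N\max(j,k)}\|f\|_2\|g\|_2
\quad\text{for } |j-k|>K.
$$
This is exactly the bound used before, so the summation goes through unchanged. If this wave-front argument turns out to be delicate, the fallback is to bound $\sum_{|j-k|>K}$ by integrating by parts in the oscillatory representation
$$
R_Af(x)=\iint e^{i\tau(\phi(x,y)-1)}|w|^A\psi f(y)\,dy\,d\tau,
$$
which produces the same decay.

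Finally, the uniform bound $\|\lambda_\epsilon\|_2\lesssim1$ gives $\nu_A=h\,dy$ with $h\in L^2$. It remains to check that $\nu_A\ne0$. Since $w$ does not vanish to infinite order, it is not identically zero on any open piece of $\{\phi=1\}$, so $|w|^A>0$ on a dense open subset of each surface. Choosing $\psi$ positive near the points of interest, $\int R_A1\,d\mu>0$. Therefore $\{h\ne0\}$ has positive Lebesgue measure and is contained in the union, which proves the claim.
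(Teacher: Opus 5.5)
\textbf{There is a genuine gap in your final step, and it cannot be repaired: the statement is false for $d\ge3$.}

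Your analytic steps are essentially sound. One small fix: take $A$ to be a large even integer, so that $|w|^A=w^A$ is smooth. Your off-diagonal argument via $|\xi|\sim|\eta|$ on the conormal bundle of $\{\phi=1\}$ is actually more careful than the paper's appeal to the canonical-graph property, which is unavailable on $\{w=0\}$.

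The problem is the passage from the incidence manifold to the fibres. Finite-type vanishing of $w$ on the $(2d-1)$-dimensional incidence manifold $\{\phi=1\}$ only makes $\{w\neq0\}$ open and dense \emph{in that manifold}. The fibres $\{x\}\times\Sigma_x$ are null subsets of it. The closed nowhere dense set $\{w=0\}$ may therefore contain $\bigcup_{x\in H}\{x\}\times\Sigma_x$ for an entire hypersurface $H$ of parameters. When $d\ge3$ such an $H$ can carry all of $E$. Then $R_A1\equiv0$ on $E$ for every $A$ and every cutoff, so $\nu_A=0$ and the uniform $L^2$ bound says nothing.

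Here is an explicit counterexample. For $d\ge3$ put $\Psi(x)=(x_1,x_1x_2,\dots,x_1x_d)$ and $\phi(x,y)=|y-\Psi(x)|^2$. Writing $\phi(x,y)=\phi_0(\Psi(x),y)$ with $\phi_0(z,y)=|y-z|^2$, the chain rule gives $w(x,y)=\det D\Psi(x)\,w_0(\Psi(x),y)=c_d\,x_1^{d-1}$ on $\{\phi=1\}$ with $c_d\neq0$. So $w$ vanishes only to finite order. But $E=\{0\}\times[0,1]^{d-1}$ has $\dim_{\mathcal H}E=d-1>1$ and $\Psi\equiv0$ on $E$. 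Hence every $\Sigma_x$ with $x\in E$ is the unit sphere about the origin, and the union is Lebesgue-null. Using the radius $1+\lambda x_1$ with $\lambda>\sqrt d$ instead of $1$ makes $\nabla_x\phi\neq0$ near $E$ and keeps $w$ of finite type there. So this is not an artifact of a degenerate double fibration.

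The paper's proof contains the same unjustified step. It passes from ``$\int|w|^A\psi\,d\sigma_x>0$ for $x$ in an open set'' to ``positive for $x$ in a set of positive $\mu$-measure'', which is exactly what fails above.

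What your argument (and the paper's) actually proves is the conclusion under an additional hypothesis: $w(x,\cdot)\not\equiv0$ on $\Sigma_x$ for a set of $x\in E$ of positive $\mu$-measure. This holds, for example, if $w(x,\cdot)$ is assumed to be of finite type on each fibre.

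For $d=2$ this hypothesis is automatic, granting nonempty $\Sigma_x$. Finite type places $\{w=0\}$ in a countable union of hypersurfaces of the incidence manifold. The fibres over the exceptional set of $x$ are $(d-1)$-dimensional and lie in $\{w=0\}$, so that set has Hausdorff dimension at most $d-1=1<a$ and hence $\mu$-measure zero.

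Once the hypothesis holds, Sogge--Stein is not needed at all. Cover $\{w\neq0\}\cap\{\phi=1\}$ by countably many patches and choose one whose $x$-projection has positive $\mu$-measure. Localize $\psi$ to that patch, restrict $\mu$ to its projection, and run the proof of Theorem~\ref{thm:fixed level}, which uses the curvature condition only on $\operatorname{supp}\psi$.
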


\begin{remark}  The Korányi Spheres associated to the Heisenberg groups present a particularly interesting example of surfaces that satisfy the hypothesis of the main results in this section.  In particular, while these surfaces have vanishing curvatures, the generalized Radon transform associated to averaging over the Korányi sphere has non-vanishing rotational curvature, in the sense that the associated Monge-Ampere determinant is non-zero (see \cite{Srivastava-Taylor26} for definition and in-depth discussion on the curvature of this object).

\end{remark}

\begin{remark}
The finite-type hypothesis on the Monge--Amp\`ere determinant is needed not only for the proof, but also for the conclusion.

Already in the translation-invariant case this can fail. Let
$$
\phi(x,y)=\phi_0(x-y),
$$
where
$$
\phi_0(z)=\max\{|z_1|,|z_2|\}.
$$
Then
$$
\{z:\phi_0(z)=1\}
$$
is the boundary of the square $[-1,1]^2$. Hence
$$
\{y:\phi(x,y)=1\}=x+\partial[-1,1]^2.
$$
The level set has flat sides, and along those sides, the relevant curvature is absent. In particular, the associated Monge--Amp\`ere determinant vanishes on the flat pieces, so the smoothing mechanism used above is not available.

Let
$$
E=C\times C,
$$
where $C\subset [0,1]$ is the middle-third Cantor set. Then
$$
\dim_{\mathcal H}(E)=2\log_3 2>1.
$$

With
$$
\phi(x,y)=\max\{|x_1-y_1|,|x_2-y_2|\},
$$
we have 
$$
\{y:\phi(x,y)=1\}=x+\partial[-1,1]^2,
$$
and it follows that 
$$
\bigcup_{x\in E}\{y:\phi(x,y)=1\}
=
E+\partial[-1,1]^2.
$$

Since
$$
E+\partial[-1,1]^2
\subset
(C+[-1,1])\times (C+\{-1,1\})
\cup
(C+\{-1,1\})\times (C+[-1,1]),
$$
and $C+\{-1,1\}$ has one-dimensional Lebesgue measure zero, the union above has two-dimensional Lebesgue measure zero.

Thus, when the level set $\{\phi_0=1\}$ contains flat pieces, even a positive-measure family of translates may fail to produce a set of positive Lebesgue measure. This shows that the finite-type nondegeneracy assumption is a genuine geometric hypothesis, not just a technical condition imposed by the proof.
\end{remark} 

\begin{figure}[t]
\centering
\begin{tikzpicture}[scale=0.8]

\begin{scope}
\node at (0,2) {curved case};

\foreach \x in {-1,0,1}
{
\fill (\x,0) circle (0.06);
\draw[gray] (\x,0) circle (1);
}

\node at (0,-2) {overlap fills area};
\end{scope}

\begin{scope}[xshift=8cm]
\node at (0,2) {flat case};

\foreach \x in {-1,0,1}
{
\fill (\x,0) circle (0.06);

\draw[gray] (\x-1,-1) -- (\x+1,-1);
\draw[gray] (\x-1,1) -- (\x+1,1);
\draw[gray] (\x-1,-1) -- (\x-1,1);
\draw[gray] (\x+1,-1) -- (\x+1,1);
}

\node at (0,-2) {stacked without filling};
\end{scope}

\end{tikzpicture}
\caption{Curvature versus flat geometry. Curved hypersurfaces spread in transverse directions and fill area, while flat pieces can stack without producing positive measure.}
\end{figure}

\begin{proof}
The proof of Corollary \ref{cor vanishing} follows the same argument as that of Theorem \ref{thm:fixed level}, with $R$ replaced by $R_A$ defined in \eqref{avg op}. By the theorem of Sogge and Stein on weighted averages over hypersurfaces \cite{SoggeStein2}, the finite type assumption on the Monge--Amp\`ere determinant implies that there exists $A>0$ such that $R_A$ satisfies the same $L^2$ Sobolev estimate as in the nondegenerate case.

Let $\mu$ be a Frostman measure supported on $E$ with exponent $s>1$. Define the incidence measure $\nu$ by
$$
\int f(y)\,d\nu(y)=\int R_A f(x)\,d\mu(x).
$$
Since $|w(x,y)|^A\psi(x,y)\ge 0$, the measure $\nu$ is nonnegative. We now verify that $\nu$ has positive total mass. Since $w$ does not vanish to infinite order on the relevant support, there exists a coordinate patch on which
$$
\int_{\phi(x,y)=1} |w(x,y)|^A \psi(x,y)\,d\sigma_x(y)
$$
is strictly positive for $x$ in an open set. By localizing $\psi$ to this patch, we may assume that this quantity is positive for $x$ in a set of positive $\mu$-measure. It follows that
$$
\nu(\mathbb R^d)
=
\int \int_{\phi(x,y)=1} |w(x,y)|^A \psi(x,y)\,d\sigma_x(y)\,d\mu(x)
>0.
$$

The Littlewood--Paley decomposition and the $L^2$ estimates for $R_A$ yield the same bounds as in the proof of the main theorem, and show that a mollification of $\nu$ is uniformly bounded in $L^2$. It follows that $\nu$ is absolutely continuous with an $L^2$ density.

Since $\nu$ is nonzero, its support has positive Lebesgue measure. This support is contained in
$$
\bigcup_{x\in E}\{y:\phi(x,y)=1\},
$$
which therefore also has positive Lebesgue measure.
\end{proof}

\subsection{A Discrete Incidence Theoretic Consequence} The fixed-level theorem has a natural finite-scale interpretation connected to geometric incidence theory. Roughly speaking, Sobolev smoothing forces sufficiently distributed families of hypersurfaces to occupy a large portion of the ambient space. The following corollary illustrates this principle in a model discrete setting.

\begin{corollary}[A finite-scale separated set consequence]
Let $1<s<2$. For each large integer $q$, let $P_q\subset [0,q]^2$ contain one point in each unit lattice square, and assume that the points of $P_q$ are uniformly separated, independently of $q$. Set
$$
\rho_q=q^{-\frac{2}{s}},
$$
and define
$$
E_q=\left(q^{-1}P_q\right)_{\rho_q}\subset \mathbb R^2.
$$
Fix $r>0$. Then there exists a constant $c>0$, independent of $q$, such that
$$
\left|
\bigcup_{x\in E_q}\{y\in\mathbb R^2: |x-y|=r\}
\right|
\ge c.
$$
Equivalently, after rescaling,
$$
\left|
\bigcup_{p\in P_q}
\left\{z\in\mathbb R^2:\big||z-p|-rq\big|\le q\rho_q\right\}
\right|
\ge c q^2.
$$
Since $q\rho_q=q^{1-\frac{2}{s}}$, this says that the union of annuli
$$
\left\{z:\big||z-p|-rq\big|\le q^{1-\frac{2}{s}}\right\},
\qquad p\in P_q,
$$
has area bounded below by a fixed positive proportion of the ambient scale $q^2$.
\end{corollary}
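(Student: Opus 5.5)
The plan is to run the proof of Theorem \ref{thm:fixed level} quantitatively, with $\phi(x,y)=|x-y|$ at level $r$, using a natural probability measure on $E_q$ in place of the Frostman measure. The key point is that all constants in that proof depend only on the Frostman constant of $\mu$ (for a fixed exponent $a>1$), the operator norm of $R$, and the cutoff $\psi$. So the main task is to check that the measures below satisfy a Frostman bound with exponent $s$ and constant independent of $q$.

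\textbf{The measure.} Let $\mu_q$ be normalized Lebesgue measure on $E_q$. This is a union of $\sim q^2$ disjoint discs of radius $\rho_q=q^{-2/s}$, centered at the points $q^{-1}p$, which are $\gtrsim q^{-1}$-separated. Each disc then carries mass $\sim q^{-2}=\rho_q^{s}$. We verify $\mu_q(B(x,t))\lesssim t^s$ uniformly in $q$ by splitting into three scale ranges.
\begin{itemize}
\item For $t\le\rho_q$, a ball meets $O(1)$ discs, and the mass is at most $q^{-2}(t/\rho_q)^2\le q^{-2}(t/\rho_q)^s=t^s$.
\item For $\rho_q\le t\le q^{-1}$, the ball again meets $O(1)$ discs, so the mass is at most $q^{-2}=\rho_q^s\le t^s$.
\item For $t\ge q^{-1}$, the ball meets $\lesssim (qt)^2$ discs, so the mass is at most $t^2\le t^s$ for $t\le1$; the case $t\ge 1$ is trivial since $\mu_q$ is a probability measure.
\end{itemize}
Since $s>1$, we can take $1<a\le s$ in the theorem's proof, and the Lemma then gives $\|\mu_q^j\|_2\lesssim 2^{j(d-a)/2}$ with constants uniform in $q$.

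\textbf{Running the argument.} Define the incidence measure $\nu_q$ by $\int f\,d\nu_q=\int Rf\,d\mu_q$, where $R$ is the circular average of radius $r$ with a cutoff $\psi\equiv1$ on a neighborhood of $[0,1]^2\times\{|x-y|=r\}$. Then $\nu_q(\mathbb R^2)=\int\sigma(\{|x-y|=r\})\,d\mu_q=2\pi r$, independent of $q$. The proof of Theorem \ref{thm:fixed level} gives $\|\nu_q*\rho_\epsilon\|_2\le C$ uniformly in $\epsilon$ and in $q$, so $\nu_q$ has density $h_q$ with $\|h_q\|_2\le C$. Applying the Cauchy--Schwarz corollary above, which gives the estimate $|\text{union}|\ge\nu(\mathbb R^2)^2/\|h\|_2^2$, we obtain
\[
|\textstyle\bigcup_{x\in E_q}\{y:|x-y|=r\}|\ge (2\pi r)^2/C^2=:c.
\]
The rescaled statement follows from the dilation $z=qy$. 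The union over $x\in E_q$ of circles of radius $r$ is contained in the union of annuli of width $2\rho_q$ around circles of radius $r$ centered at $q^{-1}p$; after dilation these become annuli of width $\sim q\rho_q=q^{1-2/s}$, and area scales by $q^2$. This gives a lower bound $cq^2$ for the union of annuli, up to adjusting the width constant.

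\textbf{Main obstacle.} The main obstacle is making the uniformity explicit. One must confirm that the implicit constant in the Lemma depends only on the Frostman constant and on $a$, which is clear from its proof. One must also confirm that the almost-orthogonality bound for $|j-k|>K$ has constants depending only on $R$ and not on $\mu$. Both are visible in the proof, but I would restate them as a quantitative version of Theorem \ref{thm:fixed level}: $\|h\|_2\le C(a,C_\mu,\phi,\psi)$. A minor point is that for $d=2$ the separation and covering counts need to be stated for the given lattice arrangement, which the three-range check above handles.
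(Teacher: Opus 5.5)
Your proposal is correct and follows essentially the same route as the paper. Both use the normalized Lebesgue measure on $E_q$, the same three-range check of a $q$-uniform Frostman bound with exponent $s$, a $q$-uniform run of the proof of Theorem~\ref{thm:fixed level} combined with the Cauchy--Schwarz corollary, and the dilation $z=qy$. Two of your steps make precise what the paper only asserts: the explicit total mass $\nu_q(\mathbb R^2)=2\pi r$, and the containment of the union of circles in the union of annuli of half-width $\rho_q$; that containment already yields exactly the annuli $\bigl||z-p|-rq\bigr|\le q\rho_q$, so no adjustment of the width constant is needed.
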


\begin{proof}
Let $\mu_q$ be the normalized Lebesgue measure on $E_q$. Since $P_q$ contains one point in each unit lattice square and the points are uniformly separated, the set $q^{-1}P_q$ consists of approximately $q^2$ points separated by $\gtrsim q^{-1}$.

We first record the uniform Frostman estimate. Since $\rho_q=q^{-2/s}$, we have
$$
\rho_q^s=q^{-2}.
$$
For balls of radius $t\le \rho_q$, the normalized Lebesgue measure of the intersection with one $\rho_q$-ball is bounded by
$$
C q^{-2}\left(\frac{t}{\rho_q}\right)^2
\le C t^s.
$$
For $\rho_q\le t\le q^{-1}$, a ball of radius $t$ meets only a bounded number of the components of $E_q$, and hence
$$
\mu_q(B(x,t))\le C q^{-2}=C\rho_q^s\le C t^s.
$$
For $t\ge q^{-1}$, the one point per unit square condition implies that a ball of radius $t$ meets at most $C(qt)^2$ of the components. Therefore
$$
\mu_q(B(x,t))\le C(qt)^2 q^{-2}=Ct^2\le Ct^s,
$$
since $t\le 1$ and $s<2$. Thus
$$
\mu_q(B(x,t))\le C t^s
$$
uniformly in $q$.

Now apply the proof of the main theorem with this uniformly Frostman family of measures. Since $s>1$, the Littlewood--Paley summation in the proof is uniformly bounded. Therefore the incidence measure associated to
$$
\bigcup_{x\in E_q}\{y:|x-y|=r\}
$$
has an $L^2$ density with norm bounded independently of $q$, while its total mass is bounded below independently of $q$. It follows by Cauchy--Schwarz that the support of this incidence measure has Lebesgue measure bounded below independently of $q$. This gives
$$
\left|
\bigcup_{x\in E_q}\{y:|x-y|=r\}
\right|
\ge c.
$$

Finally, rescale by the map $z=qy$. Lebesgue measure is multiplied by $q^2$, and the circle of radius $r$ centered at $x=q^{-1}p+O(\rho_q)$ becomes an annulus of radius $rq$ and thickness comparable to $q\rho_q=q^{1-2/s}$ centered at $p$. This gives the stated rescaled form.
\end{proof}

\begin{figure}[h]
\centering
\begin{tikzpicture}[scale=0.65]

\draw[step=1, gray!35, very thin] (0,0) grid (6,6);

\foreach \x/\y in {
0.5/0.6, 1.4/0.5, 2.6/0.7, 3.5/0.4, 4.6/0.6, 5.4/0.5,
0.4/1.5, 1.6/1.4, 2.5/1.6, 3.6/1.5, 4.5/1.4, 5.6/1.6,
0.5/2.6, 1.5/2.5, 2.4/2.4, 3.5/2.6, 4.6/2.5, 5.5/2.4,
0.6/3.4, 1.4/3.6, 2.6/3.5, 3.5/3.4, 4.4/3.6, 5.6/3.5,
0.5/4.5, 1.6/4.4, 2.5/4.6, 3.6/4.5, 4.5/4.4, 5.5/4.6,
0.4/5.5, 1.5/5.6, 2.6/5.4, 3.4/5.5, 4.6/5.6, 5.5/5.4}
{
\fill (\x,\y) circle (0.045);
\draw[gray!60] (\x,\y) circle (0.9);
\draw[gray!60] (\x,\y) circle (1.08);
}

\node at (3,-1.3) {uniformly distributed centers};
\node at (3,7.4) {overlapping annuli at a fixed scale};

\end{tikzpicture}
\caption{A schematic illustration of the finite-scale consequence: a uniformly distributed set of points gives rise to a family of annuli whose union occupies a large region.}
\end{figure}

\begin{remark}[Incidence interpretation]
The preceding corollary admits a natural interpretation in the language of incidence geometry. Let $P_q\subset [0,q]^2$ be as above, with one point in each unit lattice square and with uniform separation. Consider the family of annuli
$$
\mathcal A_p=\left\{z\in\mathbb R^2:\big||z-p|-rq\big|\le q^{1-\frac{2}{s}}\right\},
\qquad p\in P_q.
$$
The corollary shows that
$$
\left|\bigcup_{p\in P_q}\mathcal A_p\right|\gtrsim q^2,
$$
with constants independent of $q$.

Equivalently, if one defines the incidence count
$$
I(z)=\#\{p\in P_q: z\in \mathcal A_p\},
$$
then
$$
\int I(z)\,dz \gtrsim q^2,
$$
which shows that the total incidence mass is of order $q^2$, and in particular the union $\bigcup_{p\in P_q}\mathcal A_p$ occupies a set of area $\gtrsim q^2$. In this sense, the family $\{\mathcal A_p\}_{p\in P_q}$ produces a large-scale geometric coverage, reflecting a high total incidence mass in the continuum setting.

This perspective is closely related to the conversion mechanism developed in \cite{IosevichJoratiLaba}, where Sobolev bounds for generalized Radon transforms are used to derive upper bounds on discrete incidences between homogeneous point sets and families of hypersurfaces. In the present setting, the argument operates in a complementary direction: instead of producing upper bounds on the number of incidences, the same smoothing mechanism forces large-scale geometric coverage by the associated family of hypersurfaces. In this sense, the finite-scale conclusion above may be viewed as a continuous analogue of the discrete incidence bounds obtained in \cite{IosevichJoratiLaba}. The present argument shows that the same Sobolev smoothing estimates that yield upper bounds for discrete incidence counts in \cite{IosevichJoratiLaba} also enforce a form of large-scale coverage in the continuum model considered here.

\end{remark}




\section{The Variable Level Result}
\subsection{The Maximal Operator Argument}

We now consider the case where the level is allowed to vary with the parameter. 
Recall the maximal operator
$$
\mathcal{M}R_I f(x)=\sup_{t \in I} |R_t f(x)|,
$$
where $I \subset (0,\infty)$ is a fixed compact interval.

Under the rotational curvature hypothesis, the operators $R_t$ form a smoothly parametrized family of Fourier integral operators of order $-\frac{d-1}{2}$. Standard results on maximal averages over curved hypersurfaces imply that $\mathcal{M}R_I$ satisfies a corresponding $L^2$ Sobolev estimate with gain $\frac{d-2}{2}-\epsilon$ derivatives for every $\epsilon>0$; see \cite{SoggeStein2,Stein}.

This leads to the following maximal variant of the main theorem. The dimension threshold obtained below should be viewed as reflecting the limitations of the maximal-operator method rather than the expected geometric threshold for all variable-parameter families. As the later intersection theorem shows, additional geometric control can restore the critical threshold $\dim_{\mathcal H}(E)>1$.

\begin{theorem}
Let $I\subset (0,\infty)$ be a fixed compact interval. Suppose $\phi\in C^\infty(\mathbb R^d\times\mathbb R^d),d\geq 3$ satisfies the rotational curvature condition on $I$, and let $t:\mathbb R^d \to I$ be an arbitrary function. If $E\subset\mathbb R^d$ is a compact set with $\dim_{\mathcal H}(E)>2$, then
$$
\left| \bigcup_{x \in E} \{y : \phi(x,y)=t(x)\} \right|>0.
$$
\end{theorem}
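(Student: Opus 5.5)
The plan is to reuse the incidence-measure argument from Theorem \ref{thm:fixed level}, with $R$ replaced by the level-dependent averaging operator
$$
\tilde R f(x)=R_{t(x)}f(x)=\int_{\phi(x,y)=t(x)} f(y)\psi(x,y)\,d\sigma_x(y).
$$
Since $t$ is arbitrary, $x\mapsto \tilde R f(x)$ need not be measurable. To avoid this, I would first assume $t$ is Borel (or restrict attention to a compact subset of $E$ where it is). Failing that, I would replace $\int \tilde Rf\,d\mu$ by an upper integral. The only estimate that will be used is the pointwise domination
$$
|\tilde R f(x)|\le \mathcal M R_I f(x),
$$
and the right-hand side is measurable, since it is lower semicontinuous when $f$ is continuous.

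Fix a Frostman measure $\mu$ on $E$ with exponent $2<a<\hd E$. Define $\nu$ by $\int f\,d\nu=\int \tilde R f\,d\mu$. Exactly as in the fixed level case, this is a positive bounded functional on $C_c$, and it is nonzero once $\psi>0$ near the relevant surfaces. Its support lies in $\bigcup_{x\in E}\{y:\phi(x,y)=t(x)\}$. Mollify to get $\lambda_\epsilon=\nu*\rho_\epsilon$ and $g_\epsilon=\lambda_\epsilon*\rho_\epsilon$. Then $\|\lambda_\epsilon\|_2^2=\int \tilde R g_\epsilon\,d\mu$, and the goal is again $\|\lambda_\epsilon\|_2^2\lesssim\|\lambda_\epsilon\|_2$.

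Here I would not use almost orthogonality, since the operator $\tilde R$ is no longer a Fourier integral operator. I would instead use a direct energy bound. Decompose $g_\epsilon=\sum_j P_jg_\epsilon$ and estimate
$$
\Big|\int \tilde R g_\epsilon\,d\mu\Big|\le \sum_j\int \mathcal M R_I(P_jg_\epsilon)\,d\mu .
$$
The task is to control $\int |F|\,d\mu$ for $F=\mathcal M R_I P_j g_\epsilon$.
\begin{itemize}
\item First, the maximal theorem with a frequency-localized input gives $\|\mathcal M R_I P_j f\|_2\lesssim 2^{-j(\frac{d-2}{2}-\varepsilon)}\|P_jf\|_2$. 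I would also want a version of this bound for $\nabla$ of the relevant pieces.
\item Second, I would use the trace-type inequality
$$
\int |F|\,d\mu\lesssim \|F\|_{W^{s,2}}\quad\text{whenever } s>\frac{d-a}{2}.
$$
This follows from $\|\mu^k\|_2\lesssim 2^{k\frac{d-a}{2}}$ (the Lemma) via a Littlewood--Paley decomposition of $\mu$.
\end{itemize}
Heuristically, the $j$-th term is then bounded by $2^{j(\frac{d-a}{2}-\frac{d-2}{2}+\varepsilon)}\|P_jg_\epsilon\|_2=2^{j(\frac{2-a}{2}+\varepsilon)}\|P_jg_\epsilon\|_2$. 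This is square-summable in $j$ precisely when $a>2$, and Cauchy--Schwarz then gives $\lesssim\|g_\epsilon\|_2\le\|\lambda_\epsilon\|_2$. The rest follows as before: a uniform $L^2$ bound, a weak limit $h$ with $d\nu=h\,dy$, and $h\not\equiv0$, so the support of $\nu$ has positive measure.

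The main obstacle is justifying the heuristic that $\mathcal M R_I P_j g$ behaves like a function at frequency $\sim 2^j$, so that its pairing with $\mu$ costs only $2^{j\frac{d-a}{2}}$. The supremum over $t$ destroys frequency localization. My first attempt would be a Sobolev embedding in $t$. Write
$$
\sup_t|R_tP_jg|^2\le \int_I|R_tP_jg|^2\,dt+2\Big(\int_I|R_tP_jg|^2dt\Big)^{1/2}\Big(\int_I|\partial_tR_tP_jg|^2dt\Big)^{1/2}.
$$
Each $R_tP_jg$ has essentially frequency $2^j$ in $x$, up to rapidly decaying tails, by the canonical graph property at each fixed $t$. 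The factor $\partial_t$ costs $2^j$. So the pairing can be done inside the $t$-integral, using $\mu$-energy estimates at frequency $2^j$: $\int |R_tP_jg|^2d\mu\lesssim 2^{j(d-a)}2^{-j(d-1)}\|P_jg\|_2^2$, uniformly in $t$. Combining these gives $\int\sup_t|R_tP_jg|^2\,d\mu\lesssim 2^{j(2-a)}\|P_jg\|_2^2$. Using $\mu(E)<\infty$ and Cauchy--Schwarz, this yields exactly the loss $a>2$ that the statement asserts. The remaining technical work is controlling the tails of $R_tP_jg$ at frequencies away from $2^j$ uniformly in $t\in I$.
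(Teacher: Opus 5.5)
Your argument is correct, and it differs from the paper's at the one step that matters.

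\textbf{The paper's route.} The paper proves an $L^2(dx)$ bound $\|\mathcal M R_IP_jf\|_2\lesssim 2^{-j\frac{d-2}{2}}\|P_jf\|_2$ via the one-dimensional Sobolev inequality in $t$. It then reruns the fixed-level scheme ``with $R$ replaced by $\mathcal M R_I$'', i.e.\ it pairs maximal pieces with the Littlewood--Paley pieces $\mu^k$ and relies on the diagonal/off-diagonal split.

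\textbf{Your route.} You instead prove the $L^2(dx)\to L^2(\mu)$ bound
\[
\Big\|\sup_{t\in I}|R_tP_jg|\Big\|_{L^2(\mu)}\lesssim 2^{j\frac{2-a}{2}}\|P_jg\|_2 .
\]
You do this by running the Sobolev inequality in $t$ inside $L^2(\mu)$, with Cauchy--Schwarz in $\mu$ on the cross term. Frequency localization and off-diagonal decay are then only invoked for the linear fixed-$t$ operators $R_t$ and $\partial_tR_t$, uniformly in $t\in I$.

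\textbf{What each buys.} Your route supplies what the paper's substitution takes for granted. $\mathcal M R_I$ is neither linear nor frequency-localizing, so the terms $\int \mathcal M R_IP_jg\,\mu^k\,dx$ with $k\gg j$ are not controlled by almost orthogonality; this is exactly the obstruction you identified. Your route also keeps the $\int_I|F|^2\,dt$ term that the paper's displayed Sobolev inequality omits, and it needs no $\epsilon$-loss. The paper's route is shorter and reuses its earlier machinery verbatim.

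\textbf{Two things to tighten.}

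First, the fixed-$t$ estimate $\int|R_tP_jg|^2\,d\mu\lesssim 2^{j(1-a)}\|P_jg\|_2^2$ must come from the pointwise Frostman bound $\sup_x\int 2^{jd}(1+2^j|x-y|)^{-M}\,d\mu(y)\lesssim 2^{j(d-a)}$. Write $F=F*\check\chi_j$, with $\chi_j$ a bump equal to $1$ on $\{|\xi|\lesssim 2^j\}$, and apply Cauchy--Schwarz. It does not follow from the $L^2$ Lemma or from the $W^{s,2}$ trace inequality in your bullets. Applied to $|F|^2$, those pass through $\|F\|_4$ and lose powers of $2^j$. Applied to $\mathcal M R_IP_jg$ directly, they would need $W^{s,2}$ control of a maximal function with $s>\frac{d-a}{2}$, which can exceed $1$.

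Second, once $\|\mathcal M R_If\|_{L^2(\mu)}\lesssim\|f\|_2$ is in hand, you do not need $\nu$ at all. Suppose $U=\bigcup_{x\in E}\{y:\phi(x,y)=t(x)\}$ had outer measure zero, and pick an open $O\supset U$ with $|O|$ small. Choose $\psi$ (as you already must for $\nu\neq0$) so that $\int\psi(x,\cdot)\,d\sigma_{x,s}\ge c$ on $E\times I$. Then $\mathcal M R_I\chi_O(x)\ge R_{t(x)}\chi_O(x)\ge c$ for every $x\in E$. By monotone approximation of $\chi_O$ by smooth functions, this gives $c^2\mu(E)\lesssim|O|$, a contradiction. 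This argument has two advantages:
\begin{itemize}
\item It handles a genuinely arbitrary $t$. Your upper-integral fallback is only sublinear, so the Riesz representation theorem does not apply to it.
\item It avoids the issue that ``$\operatorname{supp}\nu$ has positive measure'' only gives $|\overline U|>0$, since $U$ need not be closed when $t$ varies.
\end{itemize}
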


\begin{proof}
The proof follows the same argument as in the fixed-level case. For each $x\in E$,
$$
R_{t(x)}f(x) \le \mathcal{M}R_I f(x),
$$
and therefore the incidence measure associated to the variable level set, defined by
$$
\int f(y)\,d\nu(y)=\int R_{t(x)}f(x)\,d\mu(x),
$$
is controlled by the maximal operator $\mathcal{M}R_I$.

We now justify the maximal estimate. For a smooth function $F$ on $I$, the one-dimensional Sobolev inequality gives
$$
\sup_{t\in I}|F(t)|^2
\lesssim
\left(\int_I |F(t)|^2\,dt\right)^{1/2}
\left(\int_I |F'(t)|^2\,dt\right)^{1/2}.
$$

Apply this pointwise in $x$ with
$$
F(t)=R_tP_jf(x).
$$

Integrating in $x$ and applying Cauchy--Schwarz in $x$, we obtain 
$$
\|\mathcal{M}R_IP_jf\|_2^2
\lesssim
\left(\int_I \|R_tP_jf\|_2^2\,dt\right)^{1/2}
\left(\int_I \|\partial_tR_tP_jf\|_2^2\,dt\right)^{1/2}.
$$

Taking square roots, we obtain
$$
\|\mathcal{M}R_IP_j f\|_2
\lesssim
\left(\int_I \|R_tP_jf\|_2^2\,dt\right)^{1/4}
\left(\int_I \|\partial_tR_tP_jf\|_2^2\,dt\right)^{1/4}.
$$

The fixed-time estimate gives
$$
\|R_tP_jf\|_2\lesssim 2^{-j\frac{d-1}{2}}\|P_jf\|_2,
$$
uniformly in $t\in I$. Differentiating in $t$ costs one derivative at frequency $2^j$, so
$$
\|\partial_tR_tP_jf\|_2
\lesssim
2^j2^{-j\frac{d-1}{2}}\|P_jf\|_2.
$$
It follows that
$$
\|\mathcal{M}R_IP_jf\|_2
\lesssim
2^{-j\frac{d-2}{2}}\|P_jf\|_2.
$$
Allowing an arbitrarily small loss, we shall use the estimate
$$
\|\mathcal{M}R_I P_j f\|_2 \le C_\epsilon 2^{-j\left(\frac{d-2}{2}-\epsilon\right)} \|P_j f\|_2.
$$

Proceeding as in the fixed-level case and using a Frostman measure with exponent $a>2$, we obtain 
$$
2^{-j\left(\frac{d-2}{2}-\epsilon\right)} 2^{j\frac{d-a}{2}}
=
2^{j\left(\frac{2-a}{2}+\epsilon\right)}.
$$
Choosing $\epsilon>0$ sufficiently small, the sum is finite whenever $a>2$. The remainder of the argument proceeds as in the proof of the main theorem, with $R$ replaced by $\mathcal{M}R_I$, and shows that the associated incidence measure is absolutely continuous with an $L^2$ density. This implies the desired positivity conclusion.
\end{proof}

\begin{remark}[Comparison with Mitsis]
The results of Mitsis \cite{Mitsis} for spheres provide the model case for the geometric mechanism isolated below. In the planar case, Mitsis proves that if $E \subset \mathbb R^2$ satisfies $\dim_{\mathcal H}(E)>1$, then
$$
\left|\bigcup_{x\in E}\{y:|y-x|=r(x)\}\right|>0
$$
for an arbitrary measurable radius function $r(x)$ taking values in a fixed compact interval.

We briefly record the standard argument controlling the maximal operator in terms of square functions in the parameter $t$. Fix $x$ and define
$$
F(t)=R_tP_j f(x).
$$
For any $t\in I$, choose $t_0 \in I$ and write
$$
F(t)=F(t_0)+\int_{t_0}^t F'(s)\,ds.
$$
Using $(a+b)^2 \le 2a^2+2b^2$, we obtain
$$
|F(t)|^2
\le
2|F(t_0)|^2
+
2\left|\int_{t_0}^t F'(s)\,ds\right|^2.
$$
By Cauchy--Schwarz,
$$
\left|\int_{t_0}^t F'(s)\,ds\right|^2
\le
|I|\int_I |F'(s)|^2 ds.
$$
Averaging the first term over $t_0 \in I$ gives
$$
|F(t_0)|^2 \le \frac{1}{|I|}\int_I |F(s)|^2 ds,
$$
and therefore
$$
|F(t)|^2
\le
C\int_I |F(s)|^2 ds
+
C\int_I |F'(s)|^2 ds.
$$
Taking the supremum over $t\in I$, we obtain
$$
\sup_{t\in I} |F(t)|^2
\le
C\int_I |F(s)|^2 ds
+
C\int_I |F'(s)|^2 ds.
$$
Returning to $F(t)=R_tP_j f(x)$ and integrating in $x$, we conclude
$$
\|\mathcal{M}R_IP_j f\|_2^2
\le
C\int_I \|R_sP_j f\|_2^2 ds
+
C\int_I \|\partial_s R_sP_j f\|_2^2 ds.
$$

The preceding estimate, combined with the known $L^2$ Sobolev bounds for $R_t$, yields an $L^2$ Sobolev improvement of order $\frac{d-2}{2}-\epsilon$ for every $\epsilon>0$, which leads to the condition $\dim_{\mathcal H}(E)>2$ in the variable-level setting.
\end{remark}

\subsubsection{A Counterexample for Variable Directions}

Let $ E$ denote a subset of $\R^d$ satisfying $\dim_{\rm H}(E) >1$. While it is a consequence of Theorem \ref{thm:fixed level} that $\bigcup_{x\in E} \,\{y: x\cdot y =1\}$ has positive measure, the next example demonstrates that this need not hold in the variable coefficient setting. 

\begin{lemma}[Kakeya Construction]{}
There exists a compact set $E \subset \mathbb R^2$ with positive Lebesgue measure such that for each $x \in E$ one can choose a line segment contained in a line of the form
$$
\{y \in \mathbb R^2 : x \cdot y = t(x)\}
$$
whose union has Lebesgue measure zero.
\end{lemma}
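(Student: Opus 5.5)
The plan is to start from a planar Besicovitch (Kakeya) set and arrange that each parameter $x$ is the normal vector of its own segment. Fix a compact set $K\subset\mathbb R^2$ of Lebesgue measure zero which contains, for every direction $e$ in an arc $\Theta$ of the unit circle (say directions within angle $\pi/8$ of $e_2$), a unit line segment $\ell_e$ parallel to $e$. Such sets exist by the classical Besicovitch construction \cite{Besicovitch}. We may take $K$ compact and the map $e\mapsto \ell_e$ Borel: $K$ is a closed null set, and the set of pairs $(e,\text{segment})$ with segment in $K$ parallel to $e$ is closed, so a measurable selection exists. A segment in direction $e$ lies in a line of the form $\{y: n\cdot y=c\}$, where $n=e^\perp$ is a unit normal and $c=n\cdot p$ for any point $p\in\ell_e$.

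Next define the parameter set
$$
E=\{x\in\mathbb R^2: 1\le |x|\le 2,\ x/|x|\in \Theta^\perp\},
$$
where $\Theta^\perp$ is the arc obtained by rotating $\Theta$ by $\pi/2$. This is a compact sector of an annulus, so it has positive Lebesgue measure. For $x\in E$, set $e(x)=(x/|x|)^{\perp}\in\Theta$ (rotating back by $-\pi/2$), and pick a point $p(x)\in\ell_{e(x)}$. Then define
$$
t(x)=x\cdot p(x)=|x|\,\bigl(n(x)\cdot p(x)\bigr),\qquad n(x)=x/|x|.
$$
The line $\{y: x\cdot y=t(x)\}$ equals $\{y:n(x)\cdot y=n(x)\cdot p(x)\}$. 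This is exactly the line through $p(x)$ perpendicular to $n(x)$, that is, parallel to $e(x)$, so it contains $\ell_{e(x)}$. Choose the segment for $x$ to be $\ell_{e(x)}$.

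The union of the chosen segments over $x\in E$ is contained in $K$, which has measure zero. Many $x$ (all $x$ on a ray) share the same segment, which is harmless. We can also make $t$ bounded and measurable, since $p$ may be taken Borel and $K$ is bounded.

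The only step I expect to need care is the existence of the Besicovitch set with a measurable choice of segments, together with compactness of $K$. I would take the standard Perron-tree construction, which yields for each $\delta>0$ a finite union of triangles of area $<\delta$ containing unit segments in all directions of $\Theta$. A Hausdorff-limit argument then gives a compact null set: the nested construction yields compact sets $K_n$ of area $<2^{-n}$, and the limit of segments in a fixed direction stays a segment in that direction, by compactness of the space of segments. Measurability of the selection follows from the Kuratowski–Ryll-Nardzewski theorem applied to the closed-valued map $e\mapsto\{p\in K: p+[0,1]e\subset K\}$. If one prefers to avoid this, the lemma only asks that a segment can be chosen for each $x$, so measurability is not even needed for the statement as written.
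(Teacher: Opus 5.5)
Your proposal is correct and follows the paper's argument: take a compact null Besicovitch set $K$, select for each $x\in E$ the unit segment of $K$ whose normal is parallel to $x$, and set $t(x)=x\cdot p(x)$ so that the chosen line contains that segment and the union of the segments lies in $K$. Restricting $E$ to an annular sector $1\le |x|\le 2$ is a small but worthwhile refinement, since the paper's arbitrary compact $E$ of positive measure does not exclude $x=0$, where $\{y: x\cdot y=t\}$ is not a line; the measurability discussion is extra and, as you note, not required by the statement.
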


\begin{proof}
It is known that there exists a compact set $K \subset \mathbb R^2$ of Lebesgue measure zero that contains a unit line segment in every direction; see \cite{Besicovitch}.

Let $E$ be any compact set of positive Lebesgue measure. For each $x \in E$, choose a unit line segment contained in $K$ whose normal vector is parallel to $x$. Since $K$ contains a unit segment in every direction, this is always possible.

Each such segment lies on a line of the form
$$
\{y \in \mathbb R^2 : x \cdot y = t(x)\}
$$
for some real number $t(x)$. 

By construction, all the selected segments lie inside $K$, and therefore their union has Lebesgue measure zero.
\end{proof}

This result shows that, in contrast with the variable radius circle problem studied by Mitsis \cite{Mitsis}, arbitrary measurable parameter dependence can lead to Kakeya-type counterexamples in variable coefficient settings.

\begin{example}[Bourgain's curved Kakeya example]
The Kakeya-type obstruction in the preceding lemma is not merely an analogy. Bourgain constructed a curved Kakeya example in $\mathbb R^3$ showing that a family of curves with all relevant directions represented may nevertheless be confined to a two-dimensional set.

The example arises from the phase
$$
\phi(x,t;y)=x\cdot y+ty_1y_2+\frac12 t^2y_1^2,
$$
which was introduced by Bourgain in his counterexamples for oscillatory integral estimates. The associated characteristic curves are obtained from
$$
\partial_y \phi(x,t;y)=w.
$$
Solving this equation gives curves of the form
$$
\{(w_1-ty_2-t^2y_1,w_2-ty_1,t): |t|\le 1\}.
$$
If one chooses $w_1=0$ and $w_2=-y_2$, the resulting family is
$$
\bigcup_{y_1,y_2}
\{(-ty_2-t^2y_1,-y_2-ty_1,t): |t|\le 1\}.
$$
Writing $(X,Y,Z)=(-ty_2-t^2y_1,-y_2-ty_1,t)$, one checks directly that
$$
X=YZ.
$$
Thus the whole family is contained in the two-dimensional hypersurface
$$
\{(X,Y,Z)\in \mathbb R^3:X=YZ\}.
$$

In higher co-dimension and variable-parameter settings, the example shows that curvature of the individual hypersurfaces alone does not prevent compression phenomena. What matters is not only the geometry of each individual member of the family, but also the way the family is allowed to vary with the parameter. The fixed-parameter results proved earlier avoid this pathology because the hypersurfaces remain tied to a common geometric scale, while arbitrary measurable parameter selections may destroy the transverse spreading responsible for positivity.

\end{example}

\begin{figure}[h]
\centering

\begin{tikzpicture}[scale=0.9]

\draw (-6,-3) rectangle (-0.5,3);

\node at (-3.25,3.6) {fixed level $t=1$};

\draw[thick]
(-5.5,-2)
.. controls (-4,-0.5) and (-2.5,0.5) ..
(-1,2);

\draw[thick]
(-5.5,2)
.. controls (-4,0.5) and (-2.5,-0.5) ..
(-1,-2);

\draw[thick]
(-5.3,-2.4)
.. controls (-3.8,-1) and (-2.3,0) ..
(-0.8,2.3);

\draw[->,thick] (-3.4,-0.3) -- (-4.3,-1.2);
\draw[->,thick] (-3.1,0.1) -- (-2,1.2);

\node at (-3.2,-3.6)
{\small transverse spreading};

\draw (0.5,-3) rectangle (6,3);

\node at (3.25,3.6) {variable measurable $t(x)$};

\draw[thick]
(1,-2)
.. controls (2.5,-1) and (4,0.5) ..
(5.5,2);

\draw[thick]
(1.1,-1.9)
.. controls (2.6,-0.9) and (4.1,0.6) ..
(5.6,2.1);

\draw[thick]
(1.2,-1.8)
.. controls (2.7,-0.8) and (4.2,0.7) ..
(5.7,2.2);

\fill[gray!35]
(2.1,-1.2)
.. controls (3,-0.5) and (4,0.5) ..
(4.9,1.2)
-- (5.2,0.9)
.. controls (4.2,0.2) and (3.2,-0.8) ..
(2.3,-1.5)
-- cycle;

\draw[->,thick] (2.2,1.7) -- (3.3,0.6);
\draw[->,thick] (4.8,-1.7) -- (3.8,-0.5);

\node at (3.25,-3.6)
{\small compression and concentration};

\end{tikzpicture}

\caption{Fixed-level families remain tied to a common geometric scale and exhibit transverse spreading, while arbitrary measurable parameter selections may produce compression phenomena analogous to Kakeya concentration.}
\end{figure}

\subsection{The Intersection Hypothesis}

We now state a complementary result showing that the full strength of the variable-parameter Mitsis theorem can be recovered under a direct geometric intersection hypothesis. The point is that the maximal-operator approach treats all measurable selections $t=t(x)$ simultaneously and therefore suffers from Kakeya-type compression phenomena, while the intersection hypothesis imposes a direct geometric transversality condition that prevents such compression. This result is logically independent of the Fourier integral operator approach developed below, and is included to isolate a geometric mechanism that recovers the full strength of Mitsis's variable-parameter theorem. If the family $\Sigma_x=\{y:\phi(x,y)=t(x)\}$ satisfies a uniform Mitsis-type intersection condition, then the positivity conclusion holds for arbitrary measurable selections $t=t(x)$, with the same threshold $\dim_{\mathcal H}(E)>1$.

\begin{theorem}[Variable coefficient Mitsis theorem under an intersection hypothesis]
Let $\phi\in C^\infty(\mathbb R^d\times\mathbb R^d),d\ge 2$ with $\nabla_y\phi(x,y)\ne 0$, and let $t:\mathbb R^d\to I$ be a measurable function, where $I$ is a fixed compact interval. For each $x \in \mathbb R^d$, define
$$
\Sigma_x:=\{y \in \mathbb R^d : \phi(x,y)=t(x)\}.
$$

Suppose that there exists a universal constant $C>0$ such that for all $x,x'\in\mathbb R^d$ and all sufficiently small $\delta>0$,
\begin{equation}\label{eqn:intersection hypo}
|\Sigma_x^\delta \cap \Sigma_{x'}^\delta|
\leq C \frac{\delta^2}{\delta+|x-x'|},
\end{equation}
where $\Sigma_x^\delta$ denotes the $\delta$-neighborhood of $\Sigma_x$. 

If $E \subset \mathbb R^d$ is a compact set with $\dim_{\mathcal H}(E)>1$, then
$$
\left|
\bigcup_{x \in E} \Sigma_x
\right|>0.
$$
\end{theorem}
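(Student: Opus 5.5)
We argue by a $\delta$-discretized $L^2$ (Córdoba/Mitsis-type) argument, using no Fourier integral operator theory. Fix $1<a<\dim_{\mathcal H}(E)$ and let $\mu$ be a Frostman measure on $E$ with $\mu(B(x,r))\lesssim r^a$, as supplied by Frostman's Lemma. Let $U=\bigcup_{x\in E}\Sigma_x$ and suppose, for contradiction, that $|U|=0$. For small $\delta>0$ put
$$
F_\delta(y)=\int \delta^{-1}1_{\Sigma_x^\delta}(y)\,d\mu(x).
$$
Measurability of $(x,y)\mapsto 1_{\Sigma_x^\delta}(y)$ follows from measurability of $t$ and continuity of $\phi$, since $\Sigma_x^\delta$ is, up to constants, the set where $|\phi(x,y)-t(x)|\lesssim\delta$. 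By $\nabla_y\phi\neq0$ and compactness (we localize $y$ to a fixed ball $B$ on which $\Sigma_x\cap B\neq\emptyset$ for $x$ in a set of positive $\mu$-measure, after passing to a piece of $E$), we have $|\Sigma_x^\delta\cap B|\gtrsim\delta$ uniformly. Hence
$$
\int_B F_\delta\gtrsim \mu(E)>0
$$
uniformly in $\delta$.

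\textbf{Upper $L^2$ bound.} By Fubini and the intersection hypothesis \eqref{eqn:intersection hypo},
$$
\|F_\delta\|_2^2=\delta^{-2}\iint |\Sigma_x^\delta\cap\Sigma_{x'}^\delta|\,d\mu(x)d\mu(x')\le C\iint\frac{d\mu(x)\,d\mu(x')}{\delta+|x-x'|}\le C\iint |x-x'|^{-1}d\mu\,d\mu=C\,I_1(\mu),
$$
and the energy $I_1(\mu)$ is finite because $a>1$ (a dyadic shell decomposition exactly as in the lemma on $\|\mu^j\|_2$ gives $\sum_l 2^{l}2^{-la}<\infty$). Thus $\|F_\delta\|_2\lesssim1$ uniformly in $\delta$.

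\textbf{Conclusion.} $F_\delta$ is supported in $U^\delta=\bigcup_{x\in E}\Sigma_x^\delta$ (more precisely in the $\delta$-neighbourhood of $\bigcup_{x\in \operatorname{supp}\mu}\Sigma_x$). By Cauchy--Schwarz,
$$
0<c\le\int_B F_\delta\le |U^\delta\cap B|^{1/2}\,\|F_\delta\|_2,
$$
so $|U^\delta\cap B|\ge c'>0$ for all small $\delta$. The main obstacle is passing from $\delta$-neighbourhoods to $U$ itself: since $t$ is only measurable, $U$ need not be closed, so $\bigcap_\delta U^\delta$ may be strictly larger than $U$. I would handle this with the paper's weak-limit strategy. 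The family $F_\delta$ is bounded in $L^2$, so a subsequence converges weakly to some $h\in L^2$ with $\int_B h\ge c$. It then suffices to show $h=0$ a.e.\ off $U$, and for this I would use Lusin's theorem on $t$: restrict to a compact $E'\subset E$ with $\mu(E')>\mu(E)/2$ on which $t$ is continuous. Then $\bigcup_{x\in E'}\Sigma_x$ is compact, equal to $\bigcap_\delta$ of its $\delta$-neighbourhoods, and contained in $U$. Running the whole argument with $\mu|_{E'}$ in place of $\mu$ gives $|U|\ge\bigl|\bigcup_{x\in E'}\Sigma_x\bigr|=\lim_\delta|\cdot^\delta|\ge c'>0$, by continuity from above of Lebesgue measure on a decreasing family of bounded sets. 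This contradicts $|U|=0$.
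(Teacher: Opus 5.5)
Your proposal is correct. Its core is the paper's argument in dual form, but your final step is genuinely different, and it is the one that is actually valid.

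\textbf{The shared core.} The paper proves the full bound $\int|M_\delta f|^2\,d\mu\lesssim\|f\|_2^2$ by Schur's test, using the uniform estimate $\sup_x\int(\delta+|x-x'|)^{-1}\,d\mu(x')\lesssim 1$. It then tests this bound on $f=\chi_{U^\delta}$. You instead compute only $\|F_\delta\|_2^2$, i.e.\ the adjoint estimate tested on $g\equiv 1$. That is exactly the information used, it needs only $I_1(\mu)<\infty$, and you finish with Cauchy--Schwarz against $\chi_{U^\delta\cap B}$.

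\textbf{The final step.} The paper argues that $|U|=0$ forces $|U^\delta|\to 0$ by outer regularity. This is not justified as written, because $\bigcap_{\delta>0}U^\delta=\overline U$, not $U$. For a merely measurable $t$ the closure can be far larger than $U$: if the graph of $t|_E$ is dense in $E\times I$, then $\overline U$ contains every level set $\{y:\phi(x,y)=s\}$ with $x\in E$, $s\in I$.

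Your Lusin reduction to a compact $E'$ on which $t$ is continuous repairs this. The set $K=\bigcup_{x\in E'}\Sigma_x$ is closed, since it is the projection along the compact factor $E'$ of the closed set $\{(x,y)\in E'\times\mathbb R^d:\phi(x,y)=t(x)\}$. Hence $|K\cap B|=\lim_{\delta\to0}|K^\delta\cap B|\ge c'$, and you obtain a closed subset of $U$ of positive measure without needing measurability of $U$.

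\textbf{Minor points.}
\begin{itemize}
\item $K$ is closed rather than compact; it is compact only after intersecting with $\overline B$. Closedness is all you use, and you should state the one-line projection reason for it.
\item Do the Lusin reduction first. That makes joint measurability of $(x,y)\mapsto 1_{\Sigma_x^\delta}(y)$ immediate. Your sublevel-set justification is loose, since $\Sigma_x^\delta$ is only comparable to $\{|\phi(x,\cdot)-t(x)|\lesssim\delta\}$.
\item The weak-limit detour is unnecessary; continuity from above on $K^\delta\cap B$ does all the work.
\item Lusin's theorem with respect to $\mu$ requires $t$ to be $\mu$-measurable (e.g.\ Borel). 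Lebesgue measurability does not give this when $\mu$ is singular. The paper's proof needs the same implicit assumption, as it does the nonemptiness of $\Sigma_x$ for a $\mu$-positive set of $x$.
\end{itemize}
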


\begin{remark}
     The intersection hypothesis (\ref{eqn:intersection hypo}) asserts that the $\delta$-neighborhoods of distinct hypersurfaces intersect in a quantitatively controlled way, decaying inversely with the separation of the parameters. It rules out large-scale alignment or concentration phenomena among the family $\{\Sigma_x\}$.
\end{remark}

\begin{figure}[h]
\centering

\begin{tikzpicture}[scale=0.95]

\draw (-6,-3) rectangle (-0.5,3);

\node at (-3.25,3.6) {transverse regime};

\draw[thick]
(-5.5,-2.2)
.. controls (-4,-0.8) and (-2.8,0.5) ..
(-1,2.2);

\draw[thick]
(-5.5,2.2)
.. controls (-4,0.8) and (-2.8,-0.5) ..
(-1,-2.2);

\draw[dashed]
(-5.7,-2.0)
.. controls (-4,-0.6) and (-2.8,0.7) ..
(-0.8,2.4);

\draw[dashed]
(-5.3,-2.4)
.. controls (-4,-1.0) and (-2.8,0.3) ..
(-1.2,2.0);

\draw[dashed]
(-5.7,2.0)
.. controls (-4,0.6) and (-2.8,-0.7) ..
(-0.8,-2.4);

\draw[dashed]
(-5.3,2.4)
.. controls (-4,1.0) and (-2.8,-0.3) ..
(-1.2,-2.0);

\fill[gray!40] (-3.45,-0.35) ellipse (0.35 and 0.55);

\node at (-3.2,-3.6)
{\small overlap decays with separation};

\draw (0.5,-3) rectangle (6,3);

\node at (3.25,3.6) {compressive regime};

\draw[thick]
(1,-2.2)
.. controls (2.5,-1) and (4,0.5) ..
(5.5,2.2);

\draw[thick]
(1.1,-2.0)
.. controls (2.6,-0.8) and (4.1,0.7) ..
(5.6,2.4);

\fill[gray!25]
(0.85,-2.35)
.. controls (2.5,-1.1) and (4.1,0.4) ..
(5.65,2.35)
-- (5.45,2.55)
.. controls (3.9,0.9) and (2.3,-0.6) ..
(0.75,-2.15)
-- cycle;

\node at (3.25,-3.6)
{\small large overlap persists};

\end{tikzpicture}

\caption{Schematic illustration of the Mitsis-type intersection condition.
Left: transverse hypersurfaces whose $\delta$-neighborhood intersections shrink as the parameter separation increases.
Right: a compressive configuration where selected hypersurfaces nearly coincide, violating the required overlap bound.
}
\end{figure}

\begin{proof}
Let $\mu$ be a Frostman measure supported on $E$ with exponent $a>1$, so that
$$
\mu(B(x,r))\lesssim r^a.
$$
For $\delta>0$, define
$$
M_\delta f(x)=\frac{1}{|\Sigma_x^\delta|}\int_{\Sigma_x^\delta} f(y)\, dy.
$$
Assume throughout that $\nabla_y \phi(x,y)\neq 0$ on the support of the family under consideration, so that each $\Sigma_x$ is a smooth hypersurface. After localizing to a compact region, standard tubular neighborhood estimates imply that
$$
|\Sigma_x^\delta|\approx \delta
$$
uniformly in $x$ and sufficiently small $\delta$.

We claim that
\begin{equation}
\int_E |M_\delta f(x)|^2 d\mu(x)\lesssim \|f\|_2^2,
\end{equation}
with a implicit constant independent of $\delta$. By duality, it is enough to prove the corresponding adjoint estimate for the operator $f \mapsto M_\delta f$. Let $g\in L^2(\mu)$. Then
$$
\left\|\int_E g(x)\frac{\chi_{\Sigma_x^\delta}}{|\Sigma_x^\delta|} d\mu(x)\right\|_2^2
=
\int_E\int_E g(x)g(x')
\frac{|\Sigma_x^\delta\cap \Sigma_{x'}^\delta|}{|\Sigma_x^\delta||\Sigma_{x'}^\delta|}
d\mu(x)d\mu(x').
$$
Using $|\Sigma_x^\delta|\approx \delta$ and the assumed intersection estimate, the kernel is bounded by
$$ \frac{|\Sigma_x^\delta\cap \Sigma_{x'}^\delta|}{|\Sigma_x^\delta||\Sigma_{x'}^\delta|} 
\lesssim \frac{1}{\delta+|x-x'|}.
$$
It remains to show that
$$
\int_E \frac{d\mu(x')}{\delta+|x-x'|}
$$
is bounded uniformly in $x$ and $\delta$. This follows from the Frostman condition. Indeed,
$$
\int_E \frac{d\mu(x')}{\delta+|x-x'|}
\le
\frac{\mu(B(x,\delta))}{\delta}
+
\sum_{k: 2^k\delta\le 1}
\frac{\mu(B(x,2^{k+1}\delta))}{2^k\delta}.
$$
Using $\mu(B(x,r))\lesssim r^a$, the right hand side is bounded by
$$
\delta^{a-1}
+
\sum_{2^k\delta\le 1}
2^{k(a-1)}\delta^{a-1}
\lesssim 1,
$$
because $a>1$. Schur's lemma gives the claimed $L^2(\mu)$ estimate.

Let
$$
U=\bigcup_{x\in E}\Sigma_x.
$$

Suppose, for contradiction, that
$$
|U|=0.
$$

Let $U^\delta$ denote the $\delta$-neighborhood of $U$. Since $|U|=0$, outer regularity of Lebesgue measure implies that
$$
|U^\delta|\to 0
\quad \text{as } \delta\to0.
$$

Since $\Sigma_x^\delta\subset U^\delta$, we have
$$
M_\delta \chi_{U^\delta}(x)=1
$$
for every $x\in E$. Hence
$$
\mu(E)\le
\int_E |M_\delta\chi_{U^\delta}(x)|^2 d\mu(x)
\lesssim |U^\delta|.
$$
Letting $\delta\to0$ gives $\mu(E)=0$, a contradiction. This contradiction shows that $|U|>0$. Unlike the Fourier integral operator argument used earlier, the proof here is purely geometric and relies directly on quantitative overlap control for the hypersurface family. In particular, the recovery of the threshold $\dim_{\mathcal H}(E)>1$ reflects the fact that the intersection hypothesis rules out the Kakeya-type compression phenomena responsible for the derivative loss in the maximal-operator approach.
\end{proof} 

The intersection hypothesis in the preceding theorem is not automatic, even under strong curvature assumptions on the defining function $\phi$. The following example shows that one may have rotational curvature, and even cinematic curvature, while the induced family $\Sigma_x=\{y:\phi(x,y)=t(x)\}$ fails the required pairwise intersection bound. 

\begin{remark}[The intersection condition is independent of rotational curvature]
The intersection hypothesis in the preceding theorem is not a consequence of standard curvature assumptions on the full family of hypersurfaces, including rotational curvature and the usual cinematic curvature conditions.

Indeed, work in $\mathbb R^d$, $d\ge 3$, and write $x=(x',x_d)$ and $y=(y',y_d)$, with $x',y'\in\mathbb R^{d-1}$. Define
$$
\phi(x,y)=y_d-x_d-|y'-x'|^2.
$$
The level sets
$$
\phi(x,y)=t
$$
are translated paraboloids
$$
y_d=x_d+|y'-x'|^2+t.
$$

Now choose
$$
t(x)=1-x_d.
$$
Then
$$
\phi(x,y)=t(x)
$$
becomes
$$
y_d=1+|y'-x'|^2.
$$
Restrict to the parameter set
$$
\{x=(0,x_d):x_d\in [0,1]\}.
$$
For every such $x$ we have $x'=0$, and hence
$$
\Sigma_x=\{y:y_d=1+|y'|^2\}.
$$
Thus all the selected hypersurfaces are identical.

Consequently, if $x=(0,a)$ and $x'=(0,b)$ with $|a-b|\approx 1$, then
$$
\Sigma_x^\delta=\Sigma_{x'}^\delta,
$$
and on any fixed bounded region,
$$
|\Sigma_x^\delta\cap \Sigma_{x'}^\delta|\approx \delta.
$$
However, the Mitsis-type intersection condition would require
$$
|\Sigma_x^\delta\cap \Sigma_{x'}^\delta|
\le C\frac{\delta^2}{\delta+|x-x'|}
\lesssim \delta^2.
$$
This is false for small $\delta$.

Thus the intersection hypothesis in the theorem is a genuine geometric assumption on the selected graph family $t=t(x)$, and cannot be replaced merely by rotational curvature or cinematic curvature of the full family $\phi(x,y)=t$.
\end{remark}

\begin{remark}[A genuinely variable coefficient family satisfying the intersection condition]
There are genuinely variable coefficient families for which the intersection condition is satisfied. Let $\Phi:\mathbb R^d\to\mathbb R^d$ be a smooth diffeomorphism whose Jacobian and inverse Jacobian are uniformly bounded on the compact region under consideration, and suppose that $\Phi$ is not affine. Define
$$
\phi(x,y)=|\Phi(y)-x|.
$$
Let $t(x)$ be any measurable function satisfying
$$
0<c\le t(x)\le C<\infty,
$$
and set
$$
\Sigma_x=\{y:\phi(x,y)=t(x)\}.
$$
Equivalently,
$$
\Sigma_x=\Phi^{-1}\{z:|z-x|=t(x)\}.
$$

Since $\Phi$ is bi-Lipschitz on the compact region under consideration, the image of $\Sigma_x^\delta$ under $\Phi$ is contained in a $C\delta$-neighborhood of the Euclidean sphere $\{z:|z-x|=t(x)\}$, and conversely contains a $c\delta$-neighborhood of that sphere. Therefore, for $d\ge 3$,
$$
|\Sigma_x^\delta\cap \Sigma_{x'}^\delta|
\le C\frac{\delta^2}{\delta+|\Phi(x)-\Phi(x')|}.
$$
Since $\Phi$ is bi-Lipschitz on the compact region under consideration, $|\Phi(x)-\Phi(x')|\approx |x-x'|$, and the stated intersection bound follows.

Choosing a nonlinear diffeomorphism $\Phi$ gives a genuinely variable coefficient example.
\end{remark}

\begin{remark}[An additional example supporting the intersection hypothesis]
A related example appears in work of Mitsis \cite{MitsisPlane}. In $\mathbb R^3$, let $D \subset S^2$ be a set of directions with $\dim_{\mathcal H}(D)>1$, and for each $\omega \in D$ let $V_\omega$ be an affine plane perpendicular to $\omega$. It follows from results of Mitsis \cite{MitsisPlane} that the union
$\bigcup\limits_{\omega \in D} V_\omega$ has positive Lebesgue measure.
\end{remark}

\subsection{Compression and the Role of Transversality}

In higher co-dimension, related compression phenomena also arise for curved families. Bourgain constructed a curved Kakeya example in $\mathbb R^3$ in which a family of curves with all relevant directions represented is nevertheless confined to a two-dimensional set; see \cite{Bourgain1991Oscillatory}. This example illustrates that curvature alone does not prevent concentration into lower-dimensional sets in variable-parameter settings.

\begin{figure}[h]
\centering

\begin{tikzpicture}[scale=0.9]

\draw (-5.5,-3.2) rectangle (-0.5,3.2);

\node at (-3,3.7) {non-compressive regime};

\draw[thick] (-5,-2.5) .. controls (-4,-1) and (-3,0) .. (-1,2.5);
\draw[thick] (-5,-1.5) .. controls (-4,-0.2) and (-2.8,1) .. (-1,1.8);
\draw[thick] (-5,-0.5) .. controls (-4,0.5) and (-2.5,1.5) .. (-1,1);
\draw[thick] (-5,0.5) .. controls (-4,1.5) and (-2.5,2.2) .. (-1,0);
\draw[thick] (-5,1.5) .. controls (-4,2.2) and (-3,2.5) .. (-1,-1);
\draw[thick] (-5,2.5) .. controls (-4,2.8) and (-3,2) .. (-1,-2);

\node at (-3,-3.8)
{\small non-compressive behavior}; 

\draw (0.5,-3.2) rectangle (5.5,3.2);

\node at (3,3.7) {compressive regime};

\fill[gray!20] (2.2,-3.2) rectangle (3.8,3.2);

\draw[thick] (1,-2.5) .. controls (2,-1) and (3,0) .. (5,2.5);
\draw[thick] (1,-1.8) .. controls (2,-0.7) and (3,0.2) .. (5,1.8);
\draw[thick] (1,-1) .. controls (2,-0.2) and (3,0.3) .. (5,1);
\draw[thick] (1,-0.2) .. controls (2,0.2) and (3,0.4) .. (5,0.2);
\draw[thick] (1,0.6) .. controls (2,0.7) and (3,0.8) .. (5,-0.6);
\draw[thick] (1,1.4) .. controls (2,1.2) and (3,1) .. (5,-1.4);
\draw[thick] (1,2.2) .. controls (2,1.7) and (3,1.2) .. (5,-2.2);

\node at (3,-3.8)
{\small compressive behavior}; 

\end{tikzpicture}

\caption{A schematic illustration of the geometric dichotomy discussed in the text. In the non-compressive regime, curvature and transversality force the family to spread through ambient space. In the compressive regime, Kakeya-type alignment phenomena allow large curved families to concentrate near lower-dimensional sets.}
\end{figure}

The examples discussed above suggest a broader geometric dichotomy governing smooth variable coefficient families.

Let $\phi(x,\xi)$ be a Hörmander-type phase function, and let $G(x,\xi)$ denote the associated Gauss map introduced in \cite{GWZ,Nadjimzadah}. Guo, Wang, and Zhang \cite{GWZ} introduced the condition
$$
(G(x,\xi)\cdot \nabla_x)^2\nabla_\xi^2\phi(x,\xi)
=
\lambda(x,\xi)
(G(x,\xi)\cdot \nabla_x)\nabla_\xi^2\phi(x,\xi),
$$
for some scalar function $\lambda(x,\xi)$, now known as Bourgain's condition. They conjectured that this condition precisely characterizes the Hörmander-type oscillatory integral operators satisfying the expected restriction-range estimates.

The significance of Bourgain's condition is closely tied to curved Kakeya compression phenomena. Guo, Wang, and Zhang showed that failure of Bourgain's condition is closely tied to Bourgain-type compression phenomena for curved Kakeya configurations. More recently, Nadjimzadah \cite{Nadjimzadah} showed that when Bourgain's condition holds, the associated curved tube geometry is, after suitable local changes of variables and at sufficiently small scales, modeled on the geometry of straight Kakeya tubes.

From the perspective of the present paper, this suggests the following heuristic principle. Curvature alone is not the decisive mechanism governing positivity and non-compression phenomena. Rather, the key issue is whether the associated geometric family admits Kakeya-type concentration.

For $(n-1)$-parameter families of curves in ${\mathbb R}^n$, one may conjecturally divide the smooth variable coefficient setting into two regimes:
\begin{enumerate}[(i)]
    \item the compressive regime, where Bourgain's condition fails and curved Kakeya-type concentration phenomena may occur;
    \item the non-compressive regime, where Bourgain's condition holds and the geometry is microlocally governed by the same incidence structure as the classical straight Kakeya problem.
\end{enumerate}

Under this interpretation, Bourgain's condition plays the role of a geometric rigidity condition separating genuinely curved compressive behavior from geometries which are locally equivalent to the straight model.

The results of the present paper fit naturally into this philosophy. In the hypersurface setting considered here, rotational curvature provides the Fourier integral operator smoothing mechanism, but additional geometric control is still required in the variable-level case. The Mitsis-type intersection condition introduced above may therefore be viewed as a non-compression hypothesis preventing the selected family of hypersurfaces from developing Kakeya-type alignment phenomena.

\section{Higher Co-dimension Families}

In this section we extend the positivity result to families of submanifolds of co-dimension $m \ge 1$. For each $x \in \mathbb R^d$, consider
$$
\Sigma_{x,t}=\{y \in \mathbb R^d : \phi_l(x,y)=t_l,\ 1 \le l \le m\},
$$
which, under suitable hypotheses, is a smooth submanifold of dimension $d-m$. Thus $\{\Sigma_{x,t}\}_{x\in E}$ is a family of submanifolds indexed by $E$.

Let $\vec{\phi}=(\phi_1,\dots,\phi_m)$ be a smooth function  from $\mathbb R^d \times \mathbb R^d$ to $\mathbb R^m$.

\begin{definition}[Nondegenerate System]{}
We say that the system $\vec{\phi}=(\phi_1,\dots,\phi_m)$ is nondegenerate at level $t$ if the following hold:
\begin{enumerate}[(i)]
    \item For each $x$ in the support under consideration, the set
    $$\Sigma_{x,t}=\{y\in \mathbb R^d:\phi_l(x,y)=t_l,\ 1\le l\le m\}$$
    is a smooth submanifold of co-dimension $m$.
    \item The associated generalized Radon transform
    $$Rf(x)=\int_{\Sigma_{x,t}} f(y)\psi(x,y)\,d\sigma_{x,t}(y)$$
    is a Fourier integral operator whose canonical relation is locally a canonical graph.
    \item The operator $R$ satisfies the $L^2$ Sobolev estimate
    $$
    \|R P_j f\|_2 \lesssim 2^{-j\frac{d-m}{2}} \|P_j f\|_2.
    $$
\end{enumerate}
\end{definition}


Under the nondegeneracy hypothesis above, the operator $R$ satisfies the $L^2$ Sobolev estimate stated in the definition. By standard $L^2$ Sobolev estimates for such operators (see \cite[Chapter VIII]{Sogge}), we have for frequency localized pieces
$$
\|R P_j f\|_2 \lesssim 2^{-j\frac{d-m}{2}} \|P_j f\|_2.
$$

\begin{theorem}
Suppose that $\vec{\phi}$ is nondegenerate in the sense above. If  $E \subset \mathbb R^d$ is a compact set with $\dim_{\mathcal H}(E)>m$.Then
$$
\left|\bigcup_{x \in E} \Sigma_{x,t}\right|>0.
$$
\end{theorem}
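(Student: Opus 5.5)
The plan is to repeat the proof of Theorem \ref{thm:fixed level} verbatim, with the hypersurface Radon transform replaced by the co-dimension $m$ operator $R$ from the nondegeneracy definition. The only change is the exponent bookkeeping, $\frac{d-1}{2}\mapsto\frac{d-m}{2}$.

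We fix a Frostman measure $\mu$ on $E$ with exponent $m<a<\dim_{\mathcal H}(E)$, so that by the lemma in the proof of Theorem \ref{thm:fixed level}, $\|\mu^k\|_2\lesssim 2^{k\frac{d-a}{2}}$. We then define the incidence measure $\nu$ by $\int f\,d\nu=\int Rf\,d\mu$. As before, it is a positive Radon measure supported on $\bigcup_{x\in E}\Sigma_{x,t}$. This uses uniform boundedness of $\int_{\Sigma_{x,t}}\psi\,d\sigma_{x,t}$, which follows from (i) and smoothness. Localizing $\psi$ to be positive near some point of the family, $\nu$ has positive total mass.

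Next we mollify, setting $\lambda_\epsilon=\nu*\rho_\epsilon$ and $g_\epsilon=\nu*\rho_\epsilon*\rho_\epsilon$, and write $\|\lambda_\epsilon\|_2^2=\sum_{j,k}\int Rg^j_\epsilon\,d\mu^k$. For $|j-k|\le K$, Cauchy--Schwarz and hypothesis (iii) bound each term by
$$2^{-j\frac{d-m}{2}}2^{j\frac{d-a}{2}}\|g^j_\epsilon\|_2=2^{j\frac{m-a}{2}}\|g_\epsilon^j\|_2.$$
Since $a>m$, this weight is square-summable, and Cauchy--Schwarz in $j$ together with orthogonality of the Littlewood--Paley pieces gives a bound $\lesssim\|g_\epsilon\|_2$. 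For $|j-k|>K$ we use (ii): the canonical relation is locally a canonical graph. This gives almost orthogonality, so the interactions are $O(2^{-N\max(j,k)})\|g^j_\epsilon\|_2\|\mu^k\|_2$. With $N$ large, these sum exactly as in the hypersurface case to $\lesssim\|g_\epsilon\|_2$. Young's inequality gives $\|g_\epsilon\|_2\le\|\lambda_\epsilon\|_2$, hence $\|\lambda_\epsilon\|_2^2\lesssim\|\lambda_\epsilon\|_2$, and so $\|\lambda_\epsilon\|_2$ is bounded uniformly in $\epsilon$.

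Finally, a weak-$L^2$ limit argument identifies $d\nu=h\,dy$ with $h\in L^2$. Since $h\not\equiv 0$ and $\nu$ is supported on the union, $\{h\neq0\}$ has positive measure inside the union, which gives the conclusion. Equivalently, Cauchy--Schwarz gives $\nu(\mathbb R^d)^2\le|\bigcup\Sigma_{x,t}|\,\|h\|_2^2$.

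The main obstacle is the off-diagonal estimate. Rapid decay of $\int RP_jf\,\overline{P_kg}$ for $|j-k|>K$ requires that the canonical relation $C\subset T^*\mathbb R^d\times T^*\mathbb R^d$ be a local canonical graph, so that it maps frequencies $|\eta|\sim2^j$ to frequencies $|\xi|\sim 2^j$. One must also control the low-frequency and non-conic parts of the symbol. The argument then reduces to non-stationary phase on the composition $P_kRP_j$, which is exactly what (ii) is assumed to provide. We would cite it from the FIO calculus as in the fixed-level proof rather than rederive it.
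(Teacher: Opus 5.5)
Your proposal is correct and follows the paper's proof essentially step for step: the same incidence measure $\nu$, the identity $\|\lambda_\epsilon\|_2^2=\int Rg_\epsilon\,d\mu$, the Littlewood--Paley splitting with diagonal terms controlled by (iii) and the Frostman bound (summable because $a>m$) and off-diagonal terms by almost orthogonality from (ii), and then $\|\lambda_\epsilon\|_2^2\lesssim\|\lambda_\epsilon\|_2$ and the weak-$L^2$ limit giving $d\nu=h\,dy$. Your explicit localization of $\psi$ to ensure $\nu(\mathbb R^d)>0$ is a small improvement, since the paper simply asserts that $\nu$ is nonzero.
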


\begin{proof}
The proof is essentially the same as the proof of Theorem \ref{thm:fixed level}.

Let $\mu$ be a Frostman measure supported on $E$ with exponent $a>m$, so that
$$
\mu(B(x,r)) \lesssim  r^a.
$$

Define the incidence measure $\nu$ by
$$
\int f(y)\, d\nu(y)=\int Rf(x)\, d\mu(x).
$$

As in the hypersurface case, $\nu$ is a finite Radon measure supported on
$$
\bigcup_{x \in E} \Sigma_{x,t}.
$$

Let $\rho \in C_c^\infty(\mathbb R^d)$ be non-negative, radial, supported in the unit ball, and satisfy $\|\rho\|_1=1$.

Define
$$
\rho_\epsilon(x)=\epsilon^{-d}\rho(x/\epsilon),
$$
and
$$
\lambda_\epsilon=\nu * \rho_\epsilon.
$$

Then
$$
\|\lambda_\epsilon\|_2^2
=
\int (\nu * \rho_\epsilon)(y)(\nu * \rho_\epsilon)(y)\,dy
=
\int (\nu * \rho_\epsilon * \rho_\epsilon)(y)\, d\nu(y).
$$

This identity follows from the symmetry of $\rho_\epsilon$.

Define
$$
g_\epsilon=\nu * \rho_\epsilon * \rho_\epsilon.
$$

By the definition of $\nu$, we have
$$
\|\lambda_\epsilon\|_2^2
=
\int Rg_\epsilon(x)\, d\mu(x).
$$

We now perform a Littlewood--Paley decomposition:
$$
g_\epsilon=\sum_j g_\epsilon^j,\quad \mu=\sum_k \mu^k.
$$

Thus
$$
\|\lambda_\epsilon\|_2^2
=
\sum_{j,k} \int Rg_\epsilon^j(x)\, d\mu^k(x).
$$

We split the sum into diagonal and off-diagonal parts.

\medskip

If $|j-k|\le K$, then by Cauchy--Schwarz,
$$
\left| \int Rg_\epsilon^j\, d\mu^k \right|
\le \|R g_\epsilon^j\|_2 \|\mu^k\|_2.
$$

Using the Fourier integral operator estimate,
$$
\|R g_\epsilon^j\|_2 \lesssim  2^{-j\frac{d-m}{2}} \|g_\epsilon^j\|_2,
$$
and the Frostman estimate proved earlier,
$$
\|\mu^k\|_2 \lesssim 2^{k\frac{d-a}{2}},
$$
we obtain
$$
\left| \int Rg_\epsilon^j\, d\mu^k \right|
\lesssim  2^{-j\frac{d-m}{2}} 2^{k\frac{d-a}{2}} \|g_\epsilon^j\|_2.
$$

If $|j-k|\le K$, this yields
$$
\left| \int Rg_\epsilon^j\, d\mu^k \right|\lesssim  2^{j\frac{m-a}{2}} \|g_\epsilon^j\|_2.
$$

Since $a>m$, we have
$$
\sum_j 2^{j(m-a)}<\infty,
$$
and therefore
$$
\sum_{|j-k|\le K}
\left| \int Rg_\epsilon^j\, d\mu^k \right|
\lesssim  \|g_\epsilon\|_2.
$$

\medskip

If $|j-k|>K$, we use almost-orthogonality for Fourier integral operators whose canonical relation is locally a graph; see \cite[Chapter VIII]{Sogge}. This gives, for every $N>0$,
$$
\left| \int Rg_\epsilon^j\, d\mu^k \right|
\le C_N 2^{-N\max(j,k)} \|g_\epsilon^j\|_2 \|\mu^k\|_2.
$$

Using the Frostman estimate and summing in $j,k$, we obtain
$$
\sum_{|j-k|>K}
\left| \int Rg_\epsilon^j\, d\mu^k \right|
\lesssim \|g_\epsilon\|_2.
$$

\medskip

Combining the two parts,
$$
\|\lambda_\epsilon\|_2^2 \lesssim\|g_\epsilon\|_2.
$$

Since $g_\epsilon=\lambda_\epsilon * \rho_\epsilon$ and $\rho_\epsilon \in L^1$ with bounded norm, Young's inequality gives
$$
\|g_\epsilon\|_2 \le  \|\lambda_\epsilon\|_2.
$$

Thus
$$
\|\lambda_\epsilon\|_2^2 \lesssim \|\lambda_\epsilon\|_2,
$$
and hence $\|\lambda_\epsilon\|_2$ is uniformly bounded.

By weak compactness in $L^2$, there exists a subsequence $\epsilon_k \to 0$ and a function $h \in L^2(\mathbb R^d)$ such that $\lambda_{\epsilon_k}$ converges weakly in $L^2$ to $h$. 

However, since $\lambda_\epsilon = \nu * \rho_\epsilon$, we have $\lambda_\epsilon \to \nu$ in the sense of distributions. For every $f \in C_c(\mathbb R^d)$,
$$
\int f(y)\lambda_{\epsilon_k}(y)\,dy \to \int f(y)\,d\nu(y).
$$

By weak convergence in $L^2$, we also have
$$
\int f(y)\lambda_{\epsilon_k}(y)\,dy \to \int f(y)h(y)\,dy.
$$

Therefore
$$
\int f(y)h(y)\,dy = \int f(y)\,d\nu(y)
$$
for all $f \in C_c(\mathbb R^d)$, and hence $d\nu = h(y)\,dy$.

Since $\nu$ is nonzero, its support has positive Lebesgue measure, and the proof is complete.
\end{proof}

\vskip.125in 

\begin{example}[Translated curves in $\mathbb R^3$]
Let $d=3$ and $m=2$. Let
$$
\Gamma=\{(s,\gamma_1(s),\gamma_2(s)):s\in I\}
$$
be a smooth curve in $\mathbb R^3$ whose curvature and torsion do not vanish on $I$. For each $x\in \mathbb R^3$, consider the translated curve
$$
\Sigma_x=x+\Gamma.
$$
Equivalently, after localizing to a coordinate patch, this family can be written in the form
$$
\Sigma_x=\{y\in\mathbb R^3:\phi_1(x,y)=0,\ \phi_2(x,y)=0\},
$$
where
$$
\phi_1(x,y)=(y_2-x_2)-\gamma_1(y_1-x_1),
$$
and
$$
\phi_2(x,y)=(y_3-x_3)-\gamma_2(y_1-x_1).
$$

The associated averaging operator is convolution with arclength measure on $\Gamma$:
$$
Rf(x)=\int_I f(x+(s,\gamma_1(s),\gamma_2(s)))\chi(s)\,ds,
$$
where $\chi\in C_c^\infty(I)$. Since $\Gamma$ has nonvanishing curvature and torsion, the Fourier transform of arclength measure on $\Gamma$ satisfies
$$
|\widehat{\sigma_\Gamma}(\xi)|\lesssim |\xi|^{-\frac{1}{3}}.
$$
It follows that
$$
\|R P_j f\|_2\lesssim 2^{-\frac{j}{3}}\|P_j f\|_2.
$$

Thus this example satisfies the higher co-dimension framework with Sobolev gain $\frac{1}{3}$. It does not give the full gain $\frac{d-m}{2}=\frac{1}{2}$, but it illustrates that the hypotheses are naturally tied to the available Fourier decay of the underlying family.

This example does not satisfy the full smoothing assumption of the theorem, but the argument may be repeated with the available Sobolev gain $\frac{1}{3}$. In that case, the same computation shows that positivity holds whenever
$$
a > d - \frac{2}{3}.
$$
In dimension $d=3$, this yields the condition
$$
a > \frac{7}{3}.
$$

Thus, even in the absence of the full smoothing gain, the method still yields positivity results, with the dimensional threshold determined by the available Fourier decay of the underlying family.
\end{example}

\begin{remark}[Variable parameters and Kakeya-type phenomena in higher co-dimension]
In the higher co-dimension setting, allowing the level parameter to vary introduces phenomena closely related to Kakeya-type constructions.

To see the issue heuristically, suppose that for each $x \in E$ one selects a parameter $t(x)$ and considers the family
$$
\{\Sigma_{x,t(x)}\}.
$$
Even when each individual surface $\Sigma_{x,t}$ has nonvanishing curvature, the freedom to choose $t(x)$ may allow the resulting family to align along lower-dimensional directions, reducing the effective transverse spreading.

This behavior is analogous to classical Kakeya constructions, where families of lines with varying directions can be arranged to lie in sets of arbitrarily small measure. In the present context, variable-parameter families may similarly concentrate along thin sets unless sufficient smoothing or transversality is available.

The maximal operator $\mathcal{M}R_I$ captures the worst-case behavior over all choices of $t$, and the loss of one derivative in the Sobolev estimate reflects precisely this Kakeya-type obstruction. As a consequence, the threshold in the variable-level setting increases from $\dim_{\mathcal H}(E)>m$ in the fixed-level case to $\dim_{\mathcal H}(E)>m+1$ (or $\dim_{\mathcal H}(E)>2$ when $m=1$).
\end{remark}


\section{The Rectifiable Endpoint}

We also record the endpoint case when $E$ is rectifiable.

\begin{theorem}\label{thm: critical fixed radius}
Suppose $\phi\in C^\infty(\mathbb R^d\times\mathbb R^d),d\ge 2$ satisfies the rotational curvature condition at $1$. If $E \subset \mathbb R^d$ is a compact $1$-rectifiable set with $\mathcal H^1(E)>0$,
then
\begin{equation}
    \label{big union}
    \left| \bigcup_{x \in E} \{y:\phi(x,y)=1\} \right|>0.
\end{equation}
\end{theorem}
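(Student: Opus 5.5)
Since $E$ is $1$-rectifiable with $\mathcal H^1(E)>0$, I will reduce to a single Lipschitz curve piece. By the definition of rectifiability, $\mathcal H^1(E\cap f_n(A_n))>0$ for some $n$. By standard theory (Lipschitz extension, Rademacher, and the Area Formula), I can pass to a compact set $A\subset\mathbb R$ of positive measure and a $C^1$ map $\gamma:\mathbb R\to\mathbb R^d$ with $\gamma(A)\subset E$ and $|\gamma'|\ge c>0$ on $A$. Indeed, $\mathcal H^1(\gamma(A))>0$ together with the Area Formula forces $J_\gamma>0$ on a positive-measure subset, and Lusin-type $C^1$ approximation of Lipschitz maps supplies the $C^1$ representative. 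Shrinking $A$, I may also assume $\gamma$ is injective on a neighborhood of $A$, so $\gamma$ is bi-Lipschitz there.

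I will then replace the Frostman measure in the proof of Theorem \ref{thm:fixed level} by $\mu=\gamma_*(1_A\,ds)$, supported on $E$. This measure only satisfies $\mu(B(x,r))\lesssim r$, so the Frostman exponent is $a=1$. In the diagonal sum of that proof this gives terms of size $2^{j\frac{1-a}{2}}\|g^j_\epsilon\|_2=\|g^j_\epsilon\|_2$. These are not $\ell^2$-summable against $\ell^2$, and this is exactly the obstacle at the endpoint. The fix is to avoid estimating $\|\mu^k\|_2$ at all and to use the curve structure directly.

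By duality, $\|\lambda_\epsilon\|_2^2=\int Rg_\epsilon\,d\mu=\int_A Rg_\epsilon(\gamma(s))\,ds$, so it suffices to prove the restriction estimate
$$
\int_A |Rf(\gamma(s))|\,ds\lesssim \|f\|_2 .
$$
Applied with $f=g_\epsilon$ and combined with $\|g_\epsilon\|_2\le\|\lambda_\epsilon\|_2$, this gives the uniform bound $\|\lambda_\epsilon\|_2\lesssim 1$. The rest of the argument (weak compactness, the density $h\in L^2$ of $\nu$, and positivity of $\nu$) is then verbatim from Theorem \ref{thm:fixed level}.

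To prove the restriction estimate I will use the Phong--Stein bound $R:L^2\to W^{\frac{d-1}{2},2}$ together with a trace theorem for restriction to a $C^1$ curve. For $d\ge3$ we have $\frac{d-1}{2}>\frac{d-1}{2}$-fails only barely: the trace of $W^{s,2}(\mathbb R^d)$ onto a curve in $L^2$ requires $s>\frac{d-1}{2}$, so the exponent is exactly critical. This endpoint trace failure is the main obstacle.

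To overcome it, I will exploit that $\gamma$ is a curve and use $L^1$ rather than $L^2$ along it. Composing, $f\mapsto Rf\circ\gamma$ is an FIO from $\mathbb R^d$ to $\mathbb R$: an average over a one-parameter family of hypersurfaces $\{y:\phi(\gamma(s),y)=1\}$. Its $TT^*$ kernel is $K(s,s')=\int \psi\psi\, d\sigma_{\gamma(s)}\,d\sigma_{\gamma(s')}$ restricted to the intersection of the two surfaces. Rotational curvature makes two such hypersurfaces with distinct parameters intersect transversally except at tangency configurations. The kernel is then bounded by $|s-s'|^{-1}$ at worst, and more precisely the intersection measure of $\delta$-neighborhoods behaves like $\delta^2/(\delta+|s-s'|)$, which is the same structure as the intersection hypothesis \eqref{eqn:intersection hypo}. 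The $|s-s'|^{-1}$ kernel is not Schur-bounded on $L^2(A)$, but it is bounded on a finite-measure set after the following refinement. I would try first to show the kernel is actually $\lesssim |s-s'|^{-1+\kappa}$ for some $\kappa>0$ generically; if that fails, I would bound $\int_A|Rf\circ\gamma|$ directly by Cauchy--Schwarz in $f$, which reduces the problem to showing $\iint_{A\times A}K(s,s')\,ds\,ds'<\infty$. This last quantity is, by the earlier computation, the double integral of $\min(\delta^{-1},|s-s'|^{-1})$ uniformly in $\delta$. I expect this logarithmic borderline to be the crux; to handle it I would try to use the fine structure of the tangency set of the surfaces, which is where rectifiability (as opposed to a mere $1$-Frostman bound) must genuinely enter.
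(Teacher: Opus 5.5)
\textbf{There is a genuine gap: you leave the central estimate open, and as posed it is false.} You reduce everything to a uniform bound on $\|\lambda_\epsilon\|_2$, i.e.\ to $\nu\in L^2$. Your restriction estimate would also force this, since $\int_A |Rf(\gamma(s))|\,ds\ge \int f\,d\nu$.

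Test it on the model case $\phi(x,y)=|x-y|$ in $\mathbb R^2$, $\gamma(s)=(s,0)$, $A=[0,1]$, with $\psi\equiv1$ near the circles. The map $(s,\theta)\mapsto(s+\cos\theta,\sin\theta)$ has Jacobian $|\cos\theta|$. So $\nu$ has density $h(y)=2(1-y_2^2)^{-1/2}$ for $y_1\in[\sqrt{1-y_2^2},\,1-\sqrt{1-y_2^2}]$. For $y_2$ close to $1$ this $y_1$-interval has length at least $1/2$, so $\int h^2\ge 2\int_{1-\eta}^{1}(1-y_2^2)^{-1}\,dy_2=\infty$. Hence $\|\lambda_\epsilon\|_2\to\infty$, and the logarithmic divergence of $\iint\min(\delta^{-1},|s-s'|^{-1})\,ds\,ds'$ that you found is real.

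It cannot be cured by ``fine structure of the tangency set'' or by more information about $E$. Rectifiability has already done all its work once you have $\gamma$ with $|\gamma'|\ge c$. The divergence comes from the points $(s,\pm1)$ of each circle, where the hypersurface moves \emph{tangentially} as the center moves, i.e.\ where $\nabla_x\phi(\gamma(s),y)\cdot\gamma'(s)=0$.

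The missing idea is to localize away from such points, and this is exactly where the paper uses rotational curvature. Fix $x_0=\gamma(t_0)$ with $v_0=\gamma'(t_0)\neq0$. Suppose $\nabla_x\phi(x_0,y)\cdot v_0$ vanished on all of $\{y:\phi(x_0,y)=1\}$. Then its $y$-gradient would be normal to that hypersurface, $\nabla_y(\nabla_x\phi\cdot v_0)=\lambda\nabla_y\phi$, and $(\lambda,v_0)$ would be a nonzero null vector of the Phong--Stein matrix. Hence some $y_0$ satisfies $\nabla_x\phi(x_0,y_0)\cdot v_0\neq0$.

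Now parametrize the hypersurfaces near $y_0$ as $y=\Phi(x,u)$. The map $F(t,u)=\Phi(\gamma(t),u)$ has Jacobian $|\det(D_x\Phi\,\gamma'(t),\partial_{u_1}\Phi,\dots,\partial_{u_{d-1}}\Phi)|\ge c>0$ for $t$ in a positive-measure subset of $A$ near $t_0$ and $u$ near $u_0$. The area formula then shows that its image, which lies in the union, has positive Lebesgue measure. No Fourier integral operator bound is needed at all.

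In your language: with $\psi$ supported near $(x_0,y_0)$, the incidence measure has a \emph{bounded} density, and your restriction estimate becomes trivial. But this holds only after that localization, which your proposal never makes. Your $C^1$ reduction of $\gamma$ is fine and is compatible with this argument. It even simplifies the paper's step of approximating $\gamma'$ by $v_0$ on a density set, since continuity of $\gamma'$ near $t_0$ does the job.
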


\begin{proof}
Since $E$ is $1$-rectifiable and $\mathcal H^1(E)>0$, there exist Lipschitz maps
$$
\gamma_n:I_n\to \mathbb R^d
$$
such that
$$
\mathcal H^1\left(E\setminus \bigcup_{n=1}^{\infty}\gamma_n(I_n)\right)=0.
$$
Hence, for some $n$, 
$$
\mathcal H^1(E\cap \gamma_n(I_n))>0.
$$
Write $\gamma=\gamma_n$ and $I=I_n$.

By the area formula for Lipschitz maps from $\mathbb R$ to $\mathbb R^d$,
$$
\int_{\gamma^{-1}(E)} |\gamma'(t)|\,dt
=
\int_E N(\gamma,\gamma^{-1}(E),x)\,d\mathcal H^1(x).
$$
where $N(\gamma,\gamma^{-1}(E),x)$ denotes the number of points in $\gamma^{-1}(x)\cap \gamma^{-1}(E)$. 

Since $\mathcal H^1(E\cap \gamma(I))>0$, after replacing $E$ by
$E\cap \gamma(I)$ the right-hand side is strictly positive. Hence
$$
\int_{\gamma^{-1}(E)} |\gamma'(t)|\,dt>0.
$$
It follows that $|\gamma'(t)|>0$ on a measurable subset
$A\subset \gamma^{-1}(E)$ of positive Lebesgue measure.

Thus
$$
\gamma(A)\subset E
$$
and
$$
|\gamma'(t)|>0
$$
for almost every $t\in A$.

Since $\gamma'$ is measurable and nonzero on a subset of $A$ of positive measure, there exists a nonzero vector $v_0$ and a measurable set $B\subset A$ of positive measure such that
$$
|\gamma'(t)-v_0|<\eta
$$
for almost every $t\in B$, where $\eta>0$ will be chosen sufficiently small.

Choose a Lebesgue density point $t_0$ of $B$ and set
$$
x_0=\gamma(t_0).
$$

We now come to the key step in the proof, which shows that the rotational curvature condition prevents the family of hypersurfaces from clustering, meaning that the surfaces do not move tangentially as $x$ moves along the Lipschitz curve.

We first claim that there exists $y_0$ such that
$$
\phi(x_0,y_0)=1
$$
and
$$
\nabla_x\phi(x_0,y_0)\cdot v_0\neq 0.
$$
Indeed, suppose otherwise. Then
$$
\nabla_x\phi(x_0,y)\cdot v_0=0
$$
for every $y$ satisfying $\phi(x_0,y)=1$.

Differentiating this identity tangentially in $y$ along the hypersurface
$$
\{y:\phi(x_0,y)=1\},
$$
we obtain
$$
v_0\cdot \partial^2_{xy}\phi(x_0,y)w=0
$$
for every tangent vector $w$ satisfying
$$
\nabla_y\phi(x_0,y)\cdot w=0.
$$
It follows that
$$
\partial^2_{xy}\phi(x_0,y)v_0
$$
is parallel to
$$
(\nabla_y\phi(x_0,y))^T.
$$
Thus, for some scalar $\lambda$,
$$
\partial^2_{xy}\phi(x_0,y)v_0=\lambda (\nabla_y\phi(x_0,y))^T.
$$
Together with
$$
\nabla_x\phi(x_0,y)\cdot v_0=0,
$$
this gives
$$
\begin{pmatrix}
0 & \nabla_x \phi \\
-(\nabla_y \phi)^T & \partial^2_{xy}\phi
\end{pmatrix}
\begin{pmatrix}
\lambda \\
v_0
\end{pmatrix}
=0
$$
at $(x_0,y)$. Since $v_0\neq 0$, this contradicts the rotational curvature hypothesis. The claim follows.

Choose such a point $y_0$. Since the rotational curvature matrix is nonsingular at $(x_0,y_0)$, in particular $\nabla_y\phi(x_0,y_0)\neq 0$. Therefore the implicit function theorem gives a smooth parametrization
$\Phi(x,u)$ defined for $x$ near $x_0$ and $u$ near some $u_0\in \mathbb R^{d-1}$, such that
$$
\Phi(x_0,u_0)=y_0
$$
and
$$
\phi(x,\Phi(x,u))=1.
$$
Moreover, the vectors
$$
\partial_{u_1}\Phi(x_0,u_0),\dots,\partial_{u_{d-1}}\Phi(x_0,u_0)
$$
span the tangent space of the hypersurface
$$
\{y:\phi(x_0,y)=1\}
$$
at $y_0$.

Differentiating
$$
\phi(x,\Phi(x,u))=1
$$
in the $x$ direction $v_0$, at $(x_0,u_0)$, gives
$$
\nabla_x\phi(x_0,y_0)\cdot v_0
+
\nabla_y\phi(x_0,y_0)\cdot D_x\Phi(x_0,u_0)v_0
=0.
$$
Since
$$
\nabla_x\phi(x_0,y_0)\cdot v_0\neq 0,
$$
we obtain
$$
\nabla_y\phi(x_0,y_0)\cdot D_x\Phi(x_0,u_0)v_0\neq 0.
$$
Therefore $D_x\Phi(x_0,u_0)v_0$ is transverse to the hypersurface
$$
\{y:\phi(x_0,y)=1\}.
$$
It follows that the $d$ vectors
$$
D_x\Phi(x_0,u_0)v_0,
\partial_{u_1}\Phi(x_0,u_0),\dots,\partial_{u_{d-1}}\Phi(x_0,u_0)
$$
span $\mathbb R^d$.

Consequently, by continuity, if $\eta>0$ is chosen sufficiently small, then there exist neighborhoods $V$ of $x_0$ and $U$ of $u_0$ such that
$$
\left|
\det
\left(
D_x\Phi(x,u)w,
\partial_{u_1}\Phi(x,u),\dots,\partial_{u_{d-1}}\Phi(x,u)
\right)
\right|
\ge c>0
$$
whenever
$$
x\in V,\qquad u\in U,\qquad |w-v_0|<\eta.
$$

Since $\gamma$ is continuous and $t_0$ is a density point of $B$, we may choose a sufficiently small interval $J\subset I$ containing $t_0$ such that
$$
\gamma(t)\in V
$$
for all $t\in J$, and
$$
|B\cap J|>0.
$$

Define
$$
F(t,u)=\Phi(\gamma(t),u)
$$
for $(t,u)\in (B\cap J)\times U$.

\begin{figure}[h]
\centering
\begin{tikzpicture}[scale=1]


\pgfmathsetmacro{\xzero}{-0.8}
\pgfmathsetmacro{\yzero}{0.25 + 0.35*(\xzero) - 0.08*(\xzero)^3}
\pgfmathsetmacro{\mzero}{0.35 - 0.24*(\xzero)^2} 

\pgfmathsetmacro{\xu}{0}
\pgfmathsetmacro{\yu}{1.5 - 0.12*(\xu)^2}

\draw[thick,domain=-2:2,samples=100,smooth]
  plot (\x,{0.25 + 0.35*(\x) - 0.08*(\x)^3});
\node at (3,0.3) {$E=\gamma(t)$};

\fill (\xzero,\yzero) circle (0.05);
\node[above] at (\xzero,\yzero) {$x_0$};

\draw[->,thick]
  (\xzero,\yzero) -- ++(1,{1*\mzero});
\node at (0.45,0.15) {$v_0$};

\draw[thick,domain=-1.8:1.8,samples=100,smooth]
  plot (\x,{1.5 - 0.12*(\x)^2});
\node at (3.2,1.5) {$\{y:\phi(x_0,y)=1\}$};

\draw[->] (\xu,\yu) -- ++(-0.6,0.6);
\draw[->] (\xu,\yu) -- ++(0.6,0.4);
\node at (0.2,2.1) {$u$};

\draw[->] (\xu,\yu) -- ++(0.6,-0.6);
\node at (1.0,0.95) {$t$};

\end{tikzpicture}
\caption{Schematic of the map $F(t,u)=\Phi(\gamma(t),u)$: variation in $t$ moves the hypersurface transversely, while the $u$-variables parametrize the hypersurface itself.}
\end{figure}

The map $F$ is Lipschitz. For almost every $t\in B\cap J$, the derivative $\gamma'(t)$ exists and satisfies
$$
|\gamma'(t)-v_0|<\eta.
$$
Therefore the Jacobian of $F$ satisfies
$$
J_F(t,u)
=
\left|
\det
\left(
D_x\Phi(\gamma(t),u)\gamma'(t),
\partial_{u_1}\Phi(\gamma(t),u),\dots,\partial_{u_{d-1}}\Phi(\gamma(t),u)
\right)
\right|
\ge c
$$
for almost every $(t,u)\in (B\cap J)\times U$.

By the area formula,
$$
\int_{(B\cap J)\times U} J_F(t,u)\,dt\,du
=
\int_{\mathbb R^d} N(F,(B\cap J)\times U,y)\,dy,
$$
where $N(F,(B\cap J)\times U,y)$ denotes the number of preimages of $y$ in $(B\cap J)\times U$.

Since
$$
|(B\cap J)\times U|>0
$$
and $J_F\ge c$ almost everywhere on this set, the left hand side is positive. Hence the image
$$
F((B\cap J)\times U)
$$
has positive Lebesgue measure.

Finally, for every $t\in B\cap J$ and every $u\in U$,
$$
\phi(\gamma(t),F(t,u))=1.
$$
Since $\gamma(t)\in E$ for $t\in B\subset A$, we have
$$
F((B\cap J)\times U)
\subset
\bigcup_{x\in E}\{y:\phi(x,y)=1\}.
$$
Therefore
$$
\left| \bigcup_{x\in E}\{y:\phi(x,y)=1\} \right|>0.
$$
This completes the proof.
\end{proof}

\begin{remark}[]{}
When $\phi(x,y) = |x-y|$, or more generally if $\phi$ is a norm associated to a convex body with nonvanishing curvature, then Theorem \ref{thm: critical fixed radius} follows as a consequence of the main results in \cite{SimonTaylor, BongersTaylor}, with the third listed author, when $d=2$ and $d>2$, respectively. 
Indeed, the quantity in \eqref{big union} is equal to the Minkowski sum $E+ S^{d-1}$, where $S^{d-1}$ is the appropriate unit sphere.
Further, this quantity is comparable to the Favard curve length, as defined in \cite{CladekDaveyTaylor, BongersTaylor, SimonTaylor}. 
See also \cite{LiTaylorTwoproj, BBMT26}, where a direct proof shows that if $E$ is $1$-rectifiable, then the Favard curve length has positive measure, and hence the conclusion of Theorem \ref{thm: critical fixed radius} holds. 
\end{remark}

\begin{remark} 
As an application of Theorem \ref{thm: critical fixed radius}, we observe that if $E\subset\R^2$ is a subset of the unit circle, $S^1$, satisfying  $\mathcal{H}^1(E)>0$, then the union of the tangent lines of $S^1$ at the points of $E$ has positive Lebesgue measure.  This follows from observing that the dot product, $\phi(x,y)= x\cdot y$ serves as an example of a function that satisfies the hypothesis of Theorem \ref{thm: critical fixed radius}. 
\end{remark}

\subsection{Comments on Variable Levels}

In the special case of circles in the plane, the fixed-radius setting is already covered by the results mentioned in the previous remark. It is natural to ask whether the rectifiable endpoint continues to hold when the radius is allowed to vary.

Let $E \subset \mathbb R^2$ be $1$-rectifiable with $\mathcal H^1(E)>0$, and let $r:\R^2\to (0,\infty)$. Consider
$$
B=\bigcup_{x\in E}\{y:|y-x|=r(x)\}.
$$
If $r$ has sufficient regularity along a parametrization of $E$, for example if $r\circ \gamma$ is Lipschitz for a Lipschitz parametrization $\gamma$ of a subset of $E$ of positive measure, then the argument used in the proof of the rectifiable endpoint adapts and yields $|B|>0$. This indicates that in the presence of curvature, one may recover positivity under mild structural assumptions on the parameter dependence.

However, without regularity assumptions on $r$, one cannot expect such a conclusion. A construction of Talagrand shows that there exists a set of Lebesgue measure zero in the plane that contains a circle centered at every point of a line \cite{Talagrand}. This indicates that the dimension $1$ variable-radius case is fundamentally different from the fixed-radius setting.

\section{Riemannian Manifold Analogues} 

We also record a Riemannian analogue of the main result. Let $(M,g)$ be a compact Riemannian manifold without boundary, and let $\rho(x,y)$ denote the Riemannian distance function. For radii below the injectivity radius, the geodesic spheres
$$
\{y\in M:\rho(x,y)=t\}
$$
form a variable coefficient family satisfying the same microlocal structure as in the Euclidean setting. The associated averaging operators are Fourier integral operators satisfying the same local $L^2$ smoothing estimates; see \cite{Sogge,Stein}. Consequently, the local arguments developed in this paper admit corresponding analogues for unions of geodesic spheres on compact Riemannian manifolds, at least below the injectivity radius and away from conjugate points, with the same formal Hausdorff-dimension thresholds. This supports the viewpoint that the underlying positivity mechanism is fundamentally microlocal and geometric rather than specifically Euclidean.

In this section, we give a sketch of how some of the ideas of this paper transfer to the Riemannian manifold setting. A more detailed exploration will be conducted in the sequel. 

Let $M$ be a compact $d$-dimensional Riemannian manifold without boundary, and let $\rho(x,y)$ denote the Riemannian distance. For $0<t<\operatorname{inj}(M)$, define the averaging operator
$$
A_t f(x)=\int_{\rho(x,y)=t} f(y)\,d\sigma_{x,t}(y),
$$
where $d\sigma_{x,t}$ is the induced surface measure on the geodesic sphere.

\begin{theorem}
Let $M$ be a compact $d$-dimensional Riemannian manifold without boundary and $0<t_0<\operatorname{inj}(M)$. If $E\subset M$ is a compact set with $\dim_{\mathcal H}(E)>1,$
then
$$
\left| \bigcup_{x \in E} \{y \in M : \rho(x,y)=t_0\} \right|>0.
$$
\end{theorem}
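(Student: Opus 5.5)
The plan is to reduce to the Euclidean fixed-level theorem (Theorem \ref{thm:fixed level}) by localizing in coordinate charts. First, I will pick a Frostman measure $\mu$ on $E$ with exponent $1<a<\dim_{\mathcal H}(E)$. Since $E$ is compact, I can cover it by finitely many small geodesic balls, and one of them, call it $B(p,r)$, carries positive $\mu$-mass. Hausdorff dimension is preserved under bi-Lipschitz charts, so $E\cap \overline{B(p,r)}$ still has dimension $>1$ in coordinates.

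Next I fix $x_1\in E\cap B(p,r)$ and a point $y_1$ with $\rho(x_1,y_1)=t_0$. Because $t_0<\operatorname{inj}(M)$, $\rho$ is smooth near $(x_1,y_1)$ and $x_1,y_1$ are not conjugate. I take coordinate charts $\kappa$ near $x_1$ and $\kappa'$ near $y_1$ and set $\phi(u,v)=\rho(\kappa^{-1}u,\kappa'^{-1}v)$, extended smoothly off a neighborhood. The key step is to check that this $\phi$ satisfies the Phong--Stein rotational curvature condition at $t_0$ near $(\kappa x_1,\kappa' y_1)$. I will argue this by the standard criterion: the rotational curvature determinant is nonzero exactly when the map $x\mapsto \nabla_y\phi(x,y)$, restricted to the level set, together with the level value, is a local diffeomorphism. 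Concretely, the canonical relation of $A_t$ is the graph of the map $(x,\xi)\mapsto(y,\eta)$ given by geodesic flow for time $t_0$. This map is a local canonical graph precisely because $\exp_x$ is a diffeomorphism on the ball of radius $\operatorname{inj}(M)$, so that the map $x\mapsto$ (the unit direction at $y$ of the geodesic to $x$) is nondegenerate. This is exactly the absence of conjugate points. The paper's assertion that the geodesic sphere averages are FIOs of order $-\frac{d-1}{2}$ with $L^2\to W^{\frac{d-1}{2},2}$ bounds (\cite{Sogge,Stein}) is the analytic content I actually need, so I could alternatively just invoke it directly.

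With $\phi$ in hand, I will apply Theorem \ref{thm:fixed level} (with level $t_0$ in place of $1$, which changes nothing in that proof) to the compact set $\kappa(E\cap \overline{B})$, after shrinking the charts and the cutoff $\psi$ so that all relevant $y$ lie in the chart $\kappa'$. Since $\dim_{\mathcal H}>1$ is preserved, this gives a set of positive Lebesgue measure inside $\kappa'(\bigcup_{x}\{\rho(x,\cdot)=t_0\})$. The one subtlety is nontriviality: the incidence measure must be nonzero. This holds because for $x$ near $x_1$ the sphere passes through the region where $\psi>0$, and $\mu$ charges that neighborhood once $B$ is taken small around a point of $\operatorname{supp}\mu$. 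Finally, pulling back by the diffeomorphism $\kappa'$ preserves null sets and positive-measure sets for the Riemannian volume, which gives the conclusion.

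I expect the main obstacle to be the verification of the rotational curvature (canonical graph) condition for $\rho$ below the injectivity radius. I will handle it via the identification of the canonical relation with the time-$t_0$ geodesic flow. In normal coordinates at $x_1$, it reduces to a perturbation of the Euclidean sphere case, where the determinant is computed explicitly and is nonzero.
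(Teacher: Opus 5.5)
Your argument is correct, but it is organized differently from the paper's. The paper stays on $M$: it splits $A_{t_0}$ into pieces $A^{\alpha,\beta}_{t_0}$ by a partition of unity, cites the $\frac{d-1}{2}$-derivative gain for each piece, and reruns the incidence-measure argument with the Littlewood--Paley pieces replaced by spectral projections of the Laplace--Beltrami operator. You instead use that only a lower bound on the measure is needed. One pair of charts around $(x_1,y_1)$, with $x_1\in\operatorname{supp}\mu$ and $\rho(x_1,y_1)=t_0$, therefore suffices, and the problem becomes the Euclidean Theorem \ref{thm:fixed level}. The only new input is that $\rho$ satisfies the Phong--Stein condition below the injectivity radius, and your canonical-relation argument does establish this. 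For fixed $y$, the map $x\mapsto \nabla_y\rho(x,y)/|\nabla_y\rho(x,y)|$ on the geodesic sphere of radius $t_0$ about $y$ is, up to sign and the identification of unit vectors with normalized covectors, the inverse of $\theta\mapsto\exp_y(t_0\theta)$, hence a diffeomorphism. Your route avoids three things the paper's sketch leaves implicit: spectral calculus on $M$, mollification on a manifold, and almost-orthogonality between spectral projections and locally defined FIOs. You also check that the incidence measure is nonzero, which the paper only asserts. The paper's intrinsic formulation, on the other hand, is the more natural starting point for the fuller global treatment it defers to a sequel.

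Two small points. First, you are really using the \emph{proof} of Theorem \ref{thm:fixed level}, with your own $\mu$ and a cutoff $\psi$ supported near $(\kappa x_1,\kappa' y_1)$, not its statement, and you should say so explicitly. An arbitrary smooth extension of $\phi$ to $\mathbb R^d\times\mathbb R^d$ need not satisfy rotational curvature on its whole level set. Moreover, the positive-measure set produced by the black-box theorem could lie where the extension differs from $\rho$. Second, your closing claim that in normal coordinates at $x_1$ the computation is a perturbation of the Euclidean sphere case holds only for small $t_0$. At distance $t_0$ the metric need not be nearly Euclidean, and $\rho(x,y)\neq|x-y|$ once $x\neq x_1$. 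You do not need this claim, however, since the exponential-map argument works for every $t_0<\operatorname{inj}(M)$.
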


\begin{remark} The conclusion is independent of the global geometry of $M$. No curvature assumptions on $M$ are required beyond smoothness, and the result applies equally to manifolds of positive, negative, or variable sectional curvature. The mechanism is entirely local, depending only on the microlocal structure of the geodesic sphere averaging operator and its associated Fourier integral representation. \end{remark}

\begin{proof}
Since $M$ is compact, it may be covered by finitely many coordinate charts. 

Let $U \subset M$ be a coordinate neighborhood. For $t_0<\operatorname{inj}(M)$, the set
$$
\{y : \rho(x,y)=t_0\}
$$
is a smooth hypersurface depending smoothly on $x$, as long as $x$ and $y$ remain in $U$. Moreover, the Riemannian distance function $\rho(x,y)$ is smooth away from the diagonal, and in local coordinates the associated phase function generates a canonical relation satisfying the standard nondegeneracy conditions for Fourier integral operators; see \cite{Sogge,Stein}.

Let $\{\chi_\alpha\}$ be a partition of unity subordinate to the atlas, and write
$$
A_{t_0} f(x)=\sum_{\alpha,\beta} A_{t_0}^{\alpha,\beta} f(x),
$$
where
$$
A_{t_0}^{\alpha,\beta} f(x)=\chi_\alpha(x)\int_{\rho(x,y)=t_0} f(y)\chi_\beta(y)\,d\sigma_{x,t_0}(y).
$$
Each operator $A_{t_0}^{\alpha,\beta}$ may be written in local coordinates as a Fourier integral operator associated to the phase function
$$
\phi_{\alpha,\beta}(x,y)=\rho(x,y),
$$
with a smooth amplitude compactly supported in the coordinate patch. 

It follows from the theory of Fourier integral operators on manifolds that each $A_{t_0}^{\alpha,\beta}$ gains $\frac{d-1}{2}$ derivatives on $L^2$; see \cite{Sogge,Stein}. More precisely, if $P_j$ denotes a spectral localization operator to frequencies of size $\sim 2^j$ defined via the functional calculus of the Laplace--Beltrami operator, then
$$
\|A_{t_0}^{\alpha,\beta} P_j f\|_2 \lesssim  2^{-j\frac{d-1}{2}} \|P_j f\|_2.
$$

Let $\mu$ be as above. Define the incidence measure $\nu$ by
$$
\int f(y)\,d\nu(y)=\int A_{t_0}f(x)\,d\mu(x).
$$
Using the partition of unity, this reduces to finitely many terms corresponding to the operators $A_{t_0}^{\alpha,\beta}$.

The remainder of the argument proceeds as in the proof of the main theorem, with the Littlewood--Paley decomposition replaced by a spectral decomposition associated to the Laplace--Beltrami operator; see \cite{Sogge}. The smoothing estimate above yields the same bounds for the frequency-localized pieces, and therefore the argument shows that a mollification of $\nu$ is uniformly bounded in $L^2$. It follows that $\nu$ is absolutely continuous with an $L^2$ density.

Since $\nu$ is nonzero, its support has positive Riemannian volume. This support is contained in
$$
\bigcup_{x \in E} \{y \in M : \rho(x,y)=t_0\},
$$
which therefore has positive measure. This shows that the positivity mechanism is stable under passage to compact manifolds and depends only on local canonical geometry.
\end{proof}

\begin{remark}[Compression phenomena in the Heisenberg group]
Related compression phenomena also arise naturally in sub-Riemannian geometry, particularly in the Heisenberg group ${\mathbb H}^n$. In that setting, horizontal curves and geodesics exhibit strong Kakeya-type behavior, and the existence and structure of Heisenberg Besicovitch sets have been studied extensively in connection with the Kakeya problem; see, for example, \cite{Heisenberg,KatzTaoKakeya} and the references therein.

From the perspective of the present paper, these examples provide further evidence that curvature or noncommutativity alone does not prevent compression phenomena. Rather, the decisive issue is the incidence geometry and the possibility of Kakeya-type alignment. This is closely related to the broader philosophy emerging from the work of Bourgain, Katz, Tao, and many others on Euclidean and curved Kakeya problems.

It would therefore be interesting to understand whether an analogue of the intersection mechanism introduced above can be formulated in the sub-Riemannian setting, and whether this leads to corresponding positivity or non-compression results for families of Heisenberg geodesics or other horizontal submanifolds.
\end{remark}

\section{Failure of Interior}
It is natural to ask whether the results and methods on positivity of Lebesgue measure imply nonempty interior. However, even in the fixed-radius setting, positivity of the Lebesgue measure does not guarantee any interior regularity.

The following lemma shows that, even for compact rectifiable sets of centers, the union of unit circles may have positive Lebesgue measure while still failing to contain any open ball.

\begin{lemma}
There exists a compact rectifiable $1$-set $E \subset {\mathbb R}^2$ such that
$$
\bigcup_{x \in E} \{y : |y-x|=1\}
$$
has positive Lebesgue measure but empty interior.
\end{lemma}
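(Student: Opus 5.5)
We will take $E=\{0\}\times K$ with $K\subset[0,1]$ a compact set of positive length and empty interior, for instance a fat Cantor set. This $E$ is a compact subset of a line segment, hence $1$-rectifiable, and $\mathcal H^1(E)=|K|>0$. Positivity of the measure of the union then follows at once from Theorem \ref{thm: critical fixed radius} with $\phi(x,y)=|x-y|$; the remark after that theorem observes that this $\phi$ satisfies the hypotheses. Alternatively, a direct area-formula computation with $F(s,\theta)=(0,s)+(\cos\theta,\sin\theta)$ gives Jacobian $|\cos\theta|$, which is nonzero away from $\theta=\pm\pi/2$. So the only real work is to show that the union has empty interior.

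Write $U=\bigcup_{s\in K}\{y:|y-(0,s)|=1\}$ and suppose $U$ contains a small open ball $B$. We will use the explicit description of $U$: $y\in U$ if and only if there is $s\in K$ with $y_1^2+(y_2-s)^2=1$. Equivalently, $s=y_2\pm\sqrt{1-y_1^2}$ lies in $K$ for one of the two choices of sign, and this requires $|y_1|\le 1$. Shrinking $B$ if necessary, we may assume that $|y_1|<1$ on $B$ and that $B$ is small compared with the gaps we intend to exhibit.

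Now restrict to a horizontal slice. Fix $y_1=c$ with $|c|<1$ such that the segment $\{c\}\times J$ lies in $B$ for some open interval $J$. Then $\{c\}\times J\subset U$ forces
\[
J\subset \big(K-\sqrt{1-c^2}\big)\cup\big(K+\sqrt{1-c^2}\big).
\]
The right-hand side is a union of two translates of $K$, each closed with empty interior. By the Baire category theorem, a finite union of closed nowhere dense sets is nowhere dense, so it cannot contain the interval $J$. This contradiction shows that $U$ has empty interior.

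The main point to verify is the slice description, namely that every circle through a point of the slice has its center at one of the two computed positions. This is immediate because the centers all lie on the vertical line $\{y_1=0\}$. There is no real obstacle beyond choosing a fat Cantor set (Smith--Volterra--Cantor) for $K$, which guarantees positive length together with empty interior.
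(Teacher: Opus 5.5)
Your proof is correct and is the paper's argument rotated by $90^\circ$: the paper takes $E=F\times\{0\}$, gets positivity from the area formula, and shows that every slice parallel to the line of centers lies in two translates of $F$. Invoking the rectifiable endpoint theorem for positivity is also legitimate. There are two small slips, neither of which affects the argument: for your vertical line of centers the Jacobian of $F(s,\theta)$ is $|\sin\theta|$, which degenerates at $\theta=0,\pi$, not $|\cos\theta|$; and fixing $y_1=c$ gives a vertical slice, not a horizontal one.
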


\begin{proof}
Let $F \subset [0,1]$ be a fat Cantor set. Thus $F$ is compact, has positive one-dimensional Lebesgue measure, and has empty interior. Set
$$
E=F \times \{0\}.
$$
Then $E$ is compact and rectifiable.

Let
$$
U=\bigcup_{x \in E} \{y : |y-x|=1\}.
$$
We can write
$$
U=\{(t+\cos \theta,\sin \theta): t \in F,\theta \in [0,2\pi]\}.
$$

Define
$$
\Phi(t,\theta)=(t+\cos \theta,\sin \theta)
$$
for $t \in F$ and $\theta \in [\pi/6,\pi/3]$. On this set the map is injective, and its Jacobian determinant is $|\cos \theta|$, which is bounded below by a positive constant. Since $F$ has positive one-dimensional Lebesgue measure, the product set $F \times [\pi/6,\pi/3]$ has positive two-dimensional Lebesgue measure. By the area formula, it follows that $\Phi(F \times [\pi/6,\pi/3])$ has positive Lebesgue measure. Hence $|U|>0$.

It remains to show that $U$ has empty interior. For $y \in \mathbb R$, let
$$
U_y=\{x \in \mathbb R : (x,y)\in U\}.
$$
If $|y|>1$, then $U_y$ is empty. If $|y|\le 1$, then any point $(x,y)\in U$ satisfies
$$
x=t+\sqrt{1-y^2}
$$
or
$$
x=t-\sqrt{1-y^2}
$$
for some $t \in F$. Hence
$$
U_y \subset (F+\sqrt{1-y^2}) \cup (F-\sqrt{1-y^2}).
$$
Since $F$ is compact and has empty interior, each translate of $F$ is compact and has empty interior. Therefore the finite union on the right has empty interior in $\mathbb R$. Thus every horizontal slice $U_y$ has empty interior.

\begin{figure}[h]
\centering
\begin{tikzpicture}[scale=1]

\draw[->] (-3,0) -- (3,0);

\node at (3.2,0) {$x$};
\node at (-3.2,0) {$U_y$};

\foreach \x in {-2.5,-2.1,-1.8,-1.3,-1.0,-0.6}
{
\fill (\x,0) circle (0.05);
}

\foreach \x in {0.6,1.0,1.3,1.8,2.1,2.5}
{
\fill (\x,0) circle (0.05);
}

\node at (-1.5,0.5) {$F+\sqrt{1-y^2}$};
\node at (1.5,0.5) {$F-\sqrt{1-y^2}$};

\node at (0,-1) {finite union of Cantor-type sets: no intervals};

\end{tikzpicture}
\caption{A horizontal slice $U_y$ is contained in a finite union of translates of a Cantor set, and therefore has empty interior.}
\end{figure}

If $U$ contained a nonempty open ball, then for some $y$ the horizontal slice of that ball would contain a nonempty open interval. This interval would be contained in $U_y$, contradicting the fact that $U_y$ has empty interior. Therefore $U$ has empty interior.
\end{proof}

The preceding lemma shows that even for compact rectifiable sets of centers, positivity of Lebesgue measure of the associated union of circles does not imply the presence of interior.
\vskip.5in

\bibliography{refs}
\bibliographystyle{abbrv}

\end{document}